\newcommand{\scal}{\textup{scal}}
\newcommand{\nmhho}[1]{\left\| #1 \right\|_{2+\A}}
\newcommand{\nmhhok}[1]{\left\| #1 \right\|_{2k+\A}}
\newcommand{\nmho}[1]{\left\| #1  \right\|_{\A}}
\newcommand{\ginit}{g_{\, \textup{init}}}
\DeclareMathOperator{\hhok}{\Lambda^{2k+\A, k+\frac{\A}{2}}(M\times [0,T])}
\DeclareMathOperator{\hhokk}{\Lambda^{2(k+1)+\A, (k+1)+\frac{\A}{2}}(M\times [0,T])}
\DeclareMathOperator{\hhokm}{\Lambda^{2(k\, -\, 1)+\A, (k \, - \, 1)+\frac{\A}{2}}(M\times [0,T])}
\numberwithin{equation}{section}
\begin{document}


\title[Yamabe flow on Manifolds with Edges]
{Yamabe flow on Manifolds with Edges}

\author{Eric Bahuaud}
\address{Department of Mathematics,
Seattle University,
Seattle, WA 98122,
USA}

\email{bahuaude@seattleu.edu}

\author{Boris Vertman}
\address{Mathematisches Institut,
Universit\"at Bonn,
53115 Bonn,
Germany}
\email{vertman@math.uni-bonn.de}
\urladdr{www.math.uni-bonn.de/people/vertman}

\subjclass[2000]{53C44; 58J35; 35K08}
\date{\today}

\begin{abstract}
{Let $(M,g)$ be a compact oriented Riemannian manifold with an incomplete edge singularity.
This article shows that it is possible to evolve $g$ by the Yamabe flow within a class of singular edge metrics. 
As the main analytic step we establish parabolic Schauder-type estimates for the heat operator 
on certain H\"older spaces adapted to the singular edge geometry. We apply these estimates to obtain 
local existence for a variety of quasilinear equations, including the Yamabe flow. This provides a setup for a 
subsequent discussion of the Yamabe problem using flow techniques in the singular setting.}
\end{abstract}

\maketitle
\tableofcontents

\section{Introduction and statement of the main result}\label{intro}

On a compact Riemannian manifold, the Yamabe problem asserts that every conformal class of metrics contains a representative of constant scalar curvature.  There are now several approaches to proving this fact.  The first complete proof, commenced in \cite{Yamabe} and continued by \cite{Trudinger, Aubin, Schoen} used the calculus of variations and elliptic partial differential equations; see \cite{LeeParker} for a survey.  Another method, introduced by Hamilton uses the geometric Yamabe flow:
\begin{equation} \label{YF}  
\left\{ \begin{array}{ll} \partial_{t} g &= - \, \scal(g(t)) \cdot g, \\
                            g (0) &= \ginit, \end{array} \right. 
\end{equation}

(and appropriate normalizations) to converge to constant scalar curvature metrics; see \cite{Brendle} for a recent overview.

It is natural to wonder to what extent the Yamabe problem holds in other settings.  There has been recent work in understanding this problem on singular manifolds with conic and more general incomplete edge metrics\footnote{Note that this is not what is commonly called the singular Yamabe problem which asks that given an appropriate closed subset $\Gamma \subset M$ find a \textit{complete} metric of constant scalar curvature on $M \setminus \Gamma$}.  See the work of Akutagawa-Botvinnik \cite{AkutagawaBotvinnik} for the conic case, cf. also Jeffres-Rowlett \cite{JeffresRowlett}.  Further, see Akutagawa-Carron-Mazzeo \cite{ACM} for a study the Yamabe problem on very general stratified spaces.  All of the papers mentioned above attack the problem from an elliptic PDE point of view.  

Regarding a geometric flow approach, the Ricci flow on (two-dimensional) surfaces starting at singular metrics has been studied by various authors; see for example the recent review by Isenberg-Mazzeo-Sesum \cite{IMS:RFI}.  One expects uniqueness of the flow to fail without specifying further boundary restrictions.  Not only is it possible to obtain a Ricci flow that remains singular (see Mazzeo-Rubinstein-Sesum \cite{MRS} and Yin \cite{Yin:RFO}), but one may also obtain a solution to the flow starting at a singular metric that becomes instantaneously complete (see Giesen-Topping \cite{Topping, Topping2}) or smooths out the cone angle (see Simon \cite{MilesSimon}).  

In this paper and its successor we are interested in the Yamabe flow on spaces with incomplete edge metrics that preserves the singular structure.  This is a first step to study the Yamabe problem from the geometric flow perspective within the setup of singular spaces.  Our work is for dimension $m > 2$.

On closed manifolds the short-time existence of the Yamabe flow is a basic consequence of the classical theory for parabolic partial differential equations.  This classical theory may be found for example in the book by Ladyzhenskaya-Solonnikov-Uraltseva \cite{LSU:LAQ}.  In the presence of conical or edge singularities the analysis is complicated by the fact that both the operators and the scalar curvature are singular.  Thus geometric and analytic conditions must be imposed to even make sense of the flow.   

To expand on this a little more, consider as an example a manifold with an isolated rigid conic singularity.  In a neighbourhood of the cone tip, we may express the metric as 
\[ g = dx^2 + x^2 k, \]
where $k$ is a smooth metric on the $f$-dimensional link $F$, and $x$ is the distance function.  A straightforward computation shows that the singular terms in the expansion for the scalar curvature of $g$ are
\[ \scal(g) = x^{-2} \left( \scal(k) - f(f-1) \right) + O(1). \]
This asymptotic calculation still holds at the leading order even if we allow higher order terms in the metric.  As already noted in \cite{AkutagawaBotvinnik}, in order to solve the Yamabe problem in an asymptotic class of singular metrics we must have $\scal(k) = f(f-1)$, which places a topological restriction on the link.  Furthermore, as we discuss below, the Yamabe flow may be written as a quasilinear parabolic equation for a conformal factor involving the (singular) Laplacian of the initial metric.  This equation need not a priori preserve any special structure of the metric, and so an appropriate class of admissible metrics must be described that are preserved under the flow.  Our approach is to translate this problem to a regularity statement for the initial scalar curvature in an appropriate H\"older function space.

We mention that in the setup of isolated conical singularities, existence and regularity of solutions to the inhomogeneous heat equation has recently been addressed in a recent preprint of Behrndt \cite{Beh:OTC}.  Another approach to estimates for conical singularities, and different from the one we give here, is given in the forthcoming paper by Mazzeo-Rubinstein-Sesum \cite{MRS}.

Our main result is as follows; see Theorem \ref{mainthm} below for the precise statement specifying the class of metrics we consider.

\begin{thm}
There exists a solution to the Yamabe flow starting within a class of compact 
Riemannian spaces with admissible simple edge singularities that remains asymptotically admissible for a short-time.
\end{thm}

In order to prove this theorem we will describe appropriate function spaces on which we can obtain estimates for the linear problem.  The remainder of the proof consists of setting up a contraction mapping argument. 

The remainder of the introduction is organized as follows.  In the next subsection we discuss the class of incomplete edge metrics.  We then describe our parabolic H\"older spaces, adapted to the singular geometry, which were also recently used by Donaldson \cite{Donaldson} and Jeffres, Mazzeo and Rubinstein \cite{JMR}. We state our main analytic result that provides an analogue of classical parabolic Schauder theory in the singular setup. Finally we state our short-time existence result for the Yamabe flow precisely. Our subsequent work will deal with long-time behaviour and convergence of the flow.

\subsection{Simple edge spaces}

The subsequent discussion of simple edge spaces follows closely the presentation in \cite{MazVer:ATO}.
Consider a compact stratified pseudomanifold $\overline{M}$, with the top-dimensional open and dense stratum $M$ 
and a single lower-dimensional stratum $B$, which is a compact smooth manifold by the axioms on the differential and topological structure of stratified spaces.

The stratum $B$ admits a neighbourhood $\overline{\calU}\subset \overline{M}$ that is a fibration of truncated cones over $B$.
More specifically, there is a radial function $x:\overline{\calU}\to [0,1]$ and a submersion $\phi: \overline{\calU} =\overline{\calU}\cap M \to B$, 
such that the preimages $\phi^{-1}(y)\cap \calU$ are all diffeomorphic to open truncated cones $\mathscr{C}(F)=(0,1]_x\times F$ over a compact smooth manifold $F$. The restriction of $x$ to each fibre $\phi^{-1}(y)$ is a radial function on that cone. The level set $Y:=x^{-1}(1)$ is the total space of a fibration $\phi: Y \to B$ with fibre $F$, smooth and compact, and corresponds to the regular boundary of the neighbourhood $\calU$. Note that we do not distinguish 
notationally between the fibration $\phi:Y \to B$ with fibre $F$ and the corresponding 
fibration of cones $\phi:\overline{\calU} \to B$ with fibre $\overline{\mathscr{C}(F)}$.

We fix dimensions for the remainder of the paper.  We set $m = \dim M$, $b=\dim B$ and $f = \dim F$. Assume $f\geq 1$ and note that
\[ m = 1 + f + b. \]

We say that a Riemannian metric $g_0$ on $M$ is a {\it rigid} incomplete edge metric if it is smooth away from $\calU$, and has the following form over $\calU$: there exists a smooth Riemannian metric $\ha$ on $B$ and a symmetric $2$-tensor $\ka$ on $Y$, that restricts to Riemannian metrics on fibres $F$, such that 
\[
\left. g_0 \right|_{\calU} = dx^2 + \phi^*\ha +  x^2 \ka.
\]

Locally in a trivializing neighbourhood of the fibration $\phi$, this describes $\calU$ simply as the cone bundle over $B$ with open truncated cones $\mathscr{C}(F)=(0,1]\times F$ as fibres, with the metric induced by $g_0$ on each fibre $\phi^{-1}(y)$ being an exact warped-product conic metric. 

More generally, we call $g$ an incomplete edge metric if $g = g_0 + h$ where $g_0$ is rigid in the sense above and where $h$ is a smooth symmetric $2$-tensor such that $|h|_{g_0}$ is smooth on $[0,1)_x \times Y$ and vanishes at $x=0$. We refer to any compact stratified Riemannian space with the structure just described, and with an incomplete edge metric, as a simple edge space $(M,g)$ with edge data $(B,F,\phi)$. 

Among the incomplete edge metrics there is a slightly more restricted class of \emph{asymptotically admissible} edge metrics $g=g_0+h$ satisfying the following two conditions. First we
require $\phi: (Y, \left. g \right|_Y) \to (B,\ha)$ to be a Riemannian submersion with respect to $g_0$. Specifically, for $p\in Y$ the thangent space $T_pY$ splits into vertical and horizontal subspaces, $T^V_p Y \oplus
T^H_p Y$, where $T^V_pY$ is the tangent space to the fibre of $\phi$ through $p$ and $T^H_p Y$ is the corresponding orthogonal complement. The submersion condition requires that the tensor $\ka$ restricted to $T^H_p Y$ vanishes. 

We refer to an edge metric as \emph{asymptotically admissible} if additionally for every $y \in B$, the fibres $(F,\ka |_{\phi^{-1}(y)})$ are isospectral. Both admissibility conditions are satisfied in case of $\dim B=0$. 

These two conditions are fundamental for a microlocal heat kernel construction on edges by refining the parametrix to the normal operator of the Laplacian on $(M,g)$ 
in an iterative procedure, see \cite{MazVer:ATO}. We hint here at the origin of these 
two conditions. The first condition guarantees 
that the normal operator of the Laplacian splits into the Laplacian on the 
cone $(\mathscr{C}(F), ds^2 + s^2 \kappa)$ and the Euclidean Laplacian 
on $(\R^b, du^2)$. The second condition ensures polyhomogeneity of the resulting heat kernel, when lifted to the corresponding blowup space, see also the remark below. 

\begin{remark}
The condition on $(F,\ka |_{\phi^{-1}(y)}), y\in B$ to be isospectral ensures polyhomogeneity of the 
heat kernel when lifted to the corresponding parabolic blowup space. Our arguments in this paper however require the heat kernel
just to be conormal with only partial asymptotic expansions at the various boundary faces of the blowup. Thus while we only use polyhomogeneity of the heat kernel to a limited extent, we nevertheless pose the isospectrality condition for convenience, as the actual constructions of the various heat kernels in \cite{Moo:HKA} and \cite{MazVer:ATO}, as written down seem to require polyhomogeneity in an essential way.
An introduction into the various elements of blowups and polyhomogeneous distributions can be found in Grieser \cite{Gr}.
\end{remark}

The actual Schauder-type estimates for the Laplacian on $(M,g)$
require the heat kernel not only to admit a partial asymptotic expansion at the 
various boundary faces of the corresponding blowup space, but also requires specific structure of certain coefficients in the expansion. This leads to posing the following three conditions, which unlike asymptotic admissibility of $g$, summarize the analytic conditions special to deriving our estimates for the linear problem.
 
\begin{defn}\label{feasible}
Let $(M,g)$ be a simple edge space  with edge data $(B,F,\phi)$ 
and a singular neighbourhood $\calU\subset M$ of the edge $B$, such that 
the incomplete edge metric $g$ is asymptotically admissible and of the form $g=g_0+h$ with 
\[
\left. g_0 \right|_{\calU} = dx^2 + \phi^*\ha +  x^2 \ka.
\]
We call $(M,g)$ a feasible edge space, if the following three conditions are satisfied.
\begin{enumerate}
\item $h$ vanishes to second order at $x=0$, i.e. $|h|_{g_0}=O(x^2)$ as $x\to 0$.
\item the lowest non-zero eigenvalue $\lambda_0 > 0$ of Laplacians $\Delta_{\ka, y}$ associated to $(F,\ka |_{\phi^{-1}(y)})$ for any $y\in B$, satisfies
$\lambda_0 > \dim F$.
\item The Laplacian $\Delta_Y$ on $(Y,\phi^*\ha + \ka)$ satisfies the following condition: For any $u\in C^\infty(B)$, the function $\Delta_Y \phi^*u$ is the pullback of a function on the base $B$.
\end{enumerate}
\end{defn}

These three conditions yield refined heat kernel asymptotics in Proposition \ref{heat-expansion}, which in turn are crucial in the estimates below. We now discuss these hypotheses.  Given an asymptotically admissible edge space with edge data $(B,F,\phi)$, we indicate the dependence of $g$ on the metric structures on fibres $F$ and the base 
$B$ explicitly by writing $g=g(\ha,\ka)$.

The first condition is required to control higher order terms in the expansion \eqref{lap-exp} for the singular Laplacian.  The second condition is somewhat mild and can always be achieved by a rescaling of the fibre metric.  To see this, let $\lambda_0>0$ be the lowest non-zero eigenvalue of the (isospectral) Laplacians  $\Delta_{\ka, b}$ associated to $(F,\ka |_{\phi^{-1}(y)})$ for any $y\in B$.  For any $c>\sqrt{f/\lambda_0}$ the edge space $(M,g(\ha,c^{-2}\ka))$ satisfies Definition \ref{feasible} (ii).  In the special case of $\dim M=2$ and $F=\mathbb{S}^1$, the condition (ii) of Definition \ref{feasible} is satisfied whenever the singular neighbourhood $\calU\subset M$ of the edge $B$ is a fibration of cones of a fixed cone angle strictly less than $\pi/4$.

We can give a sufficient geometric condition for the third condition of Definition \ref{feasible}.  Recall the fibres of a Riemannian submersion admit a second fundamental form  (see for example \cite{Bes:EM}).  Working in an orthonormal frame for $Y$ that respects the splitting of vertical and horizontal subspaces, one finds condition (iii) holds if the mean curvature vector field of the fibres is projectable.  This holds for example in case the fibres are minimal.  This condition also appeared in J. M\"uller's study of Hodge theory of fibred cusp metrics \cite{Mul:ZKU}.

Finally, we should point out that partial differential operators modelled on spaces with isolated conical singularities were studied
by Cheeger \cite{Che:SGO}, Lesch \cite{Les:OOF}, Melrose \cite{Mel:TAP}, Br\"uning and Seeley \cite{BS} and Gil and Mendoza \cite{GM},
to name just a few.  Extensions to spaces with simple edge singularities, as studied here, were developed by Mazzeo
\cite{Maz:ETO}, and Schulze and his collaborators \cite{Sch1}, see also Br\"uning-Seeley \cite{BS3} and more recently 
Gil-Krainer-Mendoza \cite{GKM}. Further extensions to spaces with iterated edge singularities have been initiated in \cite{ALMP}.

\subsection{Mapping properties of the heat operator on H\"older spaces}

The proof of short-time existence for the Yamabe flow is based on a careful analysis of the mapping properties of 
the heat operator acting between H\"older spaces, defined with respect to an 
incomplete edge metric. Consider a feasible edge space $(M,g)$ as above.

\begin{defn}
The H\"older space $\ho, \A\in (0,1),$ is defined as the space of functions 
$u(p,t)$ that are continuous on $\overline{M} \times [0,T]$, such that their $\A$-th H\"older norm

\begin{align*}
\|u\|_{\A}:=\|u\|_{\infty} + \sup \left(\frac{|u(p,t)-u(p',t')|}{d_M(p,p')^{\A}+
|t-t'|^{\frac{\A}{2}}}\right) <\infty, 
\end{align*}
where $d_M(p,p')$ represents the distance between $p,p'\in M$ with respect to the 
incomplete edge metric $g$.  In local coordinate charts in the singular edge neighbourhood $\mathscr{U}$ may be
equivalently defined by
$$d_M((x,y,z), (x',y',z'))=\sqrt{|x-x'|^2+(x+x')^2|z-z'|^2 + |y-y'|^2}.$$
\end{defn}

\begin{defn}

Consider the Laplacian $\Delta_g$ of $(M,g)$ and the edge vector fields $\V$ 
introduced in \cite{Maz:ETO} and reviewed at the beginning of Section \ref{s-asymptotics}. Let $\mathcal{V}'_e$ be 
the space of edge vector fields with local set of generators $\{x\partial_x, x\partial_y, \partial_z\}$, where we 
require the $x\partial_y$ coefficient to be fibrewise constant at $x=0$, and put
$$\dom:=\{\partial_t, \Delta_g, x^{-1}V \mid \ V \in \mathcal{V}'_e\}.$$
Then the higher order H\"older spaces are defined as follows
\begin{equation*}
\begin{split}
\hhok = \{u\in \Lambda^{\A,\frac{\A}{2}} \mid &\Delta^j_g u, X\circ \Delta^j_g u \in \ho, \\
&X\in \dom, j=0,...,k-1\},
\end{split}
\end{equation*}
where differentiation is a priori in the distributional sense, and the H\"older norm 
$$\|u\|_{2k+\A}= \|u\|_{\A}+ \sum_{j=0}^{k-1} \|\Delta^j_g u\|_{\A} +
\sum_{j=0}^{k-1} \sum_{X\in \dom} \| X\circ \Delta^j_g u\|_{\A}.$$
\end{defn}

Though not classical, these \emph{hybrid} H\"older spaces seem not only to provide
a framework for treatment of parabolic Schauder-type estimates on incomplete edges, 
but also play an essential role in the analysis of K\"ahler-Einstein edge metrics,
see \cite{Donaldson, JMR}. 

Our first main result discusses the mapping properties of the heat operator on simple edge spaces, 
and essentially correspond to the classical parabolic Schauder estimates.

\begin{thm}
Let $(M^m,g)$ be a feasible edge space with edge data $(B^b,F^f,\phi)$ 
and a singular neighbourhood $\calU\subset M$ of the edge $B$.
The incomplete edge metric is given by $g=g_0+h$ with 
\[
\left. g_0 \right|_{\calU} = dx^2 + \phi^*\ha +  x^2 \ka.
\]
Let $\lambda_0>f$ be the smallest non-zero eigenvalue of $\Delta_{\kappa, y},y\in B$. Put 
\[
 \A_0 := -\frac{(f-1)}{2}+\sqrt{\frac{(f-1)^2}{4}+\lambda_0}-1>0.
\]
Denote by $\Delta_g$ the Friedrichs extension of the Laplacian on $(M,g)$. Then for $\A\in (0,
\min\{\A_0,1\})$ the heat operator $e^{-t\Delta_g}$ acts as a 
bounded convolution operator with respect to the H\"older spaces $(k\in \N_0)$
\begin{equation*}
\begin{split}
&e^{-t\Delta_g}: \hhok \to \hhokk,\\
&e^{-t\Delta_g}: \hhok \to \sqrt{t} \hhok, 
\end{split}
\end{equation*}
\end{thm}

As elaborated immediately after the definition of feasibility, the relation $\lambda_0>\dim F$ can 
be achieved using a rescaling of the metric $\ka$, and corresponds to 
choosing small cone angles in the surface case. This restriction guarantees 
$\A_0>0$. For $\dim M=2$ and the singular neighbourhood 
$\calU\subset M$ of the edge $B$ being a fibration of cones over $F=\mathbb{S}^1$ of a fixed cone angle $\theta <\pi/4$,
we have $\A_0=\textup{cotan} \, \theta -1>0$. In the rest of the paper $\A_0$ and $\A\in (0,\min\{\A_0,1\})$ are fixed.

We take this opportunity to mention a related paper.  In joint work with Emily Dryden \cite{BDV:THE}, the authors have also obtained mapping properties of the heat operator for the homogeneous Cauchy problem for the heat equation using time-weighted (spatial) H\"older spaces adapted to the incomplete metric.  The estimates we obtained there are more naturally applied to semilinear parabolic problems and are closely related to the work 
of Jeffres and Loya \cite{JefLoy:RSH}.  

\subsection{Yamabe flow on incomplete edge spaces}

We now return to the Yamabe flow given in equation \eqref{YF}.
Let $(M^m,\ginit)$ be a feasible edge space.  The scalar curvature of $\ginit$ becomes unbounded near the edge $x=0$, so further restrictions are required to ensure the flow exists.  
In addition to the feasibility of $\ginit$ we will require regularity of its scalar curvature
\begin{align}\label{hypo}
\scal(\ginit) \in \Lambda^{2k+\A}(M,\ginit),
\end{align}
for some $k\in \N$, where $\Lambda^{2k+\A}(M,\ginit)\subset \hhok$ is the subset of all time-independent functions of $\hhok$,
defined with respect to $\ginit$.  This already implies that the obstruction $\scal(\ka_{\textup{init}}) = f(f-1)$ is satisfied.

Returning to the Yamabe flow, since the flow preserves the conformal class of $\ginit$
we write
\[ g(t) =  e^{2 u(p,t)} \cdot \ginit, \]
which transforms the Yamabe flow to a scalar equation for $u$
\begin{equation*}
\partial_t u = - \frac{e^{-2u}}{2}  \left( 2(m-1) \Delta_{\ginit} u - (m-2)(m-1) |\nabla u|^2 + \scal(\ginit) \frac{}{}\right), 
\quad u(p,0) = 0,
\end{equation*}
where $\Delta_{\ginit}$ is the Laplacian of $(M,\ginit)$.
This is a quasilinear equation for $u$ and our second main result concerns 
local existence and uniqueness of its solutions.

\begin{thm} \label{mainthm}
Let $(M^m,\ginit)$ be a feasible edge space such that $\scal(\ginit) \in \Lambda^{2k+\A}(M,\ginit)$
for some $k\in \N$. Then the (transformed) Yamabe flow equation 
admits a solution $u \in \hhok$ for some $T>0$ and $\A\in (0,\A_0)$.
\end{thm}

The metric $e^{2u} \ginit$ is a simple edge metric that satisfies the last two conditions of feasibility after a suitable transformation of the defining function.  This transformation may only alter the order of decay of the nonrigid part of the metric so that $|h|_{g_0} = O(x)$.
Thus, Theorem \ref{mainthm} indeed asserts short-time existence of Yamabe flow within a class of compact Riemannian spaces with simple 
admissible edge singularities. 

The outline of the remainder of this paper is as follows.  In Section \ref{s-asymptotics} we discuss the asymptotics of the heat kernel of the Laplacian.  In Section \ref{schauder-est} we derive the analogue of Schauder estimates.  In Section \ref{sec:contraction} we prove a short-time existence result for certain quasilinear parabolic equations, and in Section \ref{sec:yamabe} we condition the Yamabe flow to apply the result of Section \ref{sec:contraction}.

Due to the length of this paper we decided to postpone a discussion of uniqueness, long-time existence and convergence results for the Yamabe flow for these metrics.  This will be done in a forthcoming manuscript.

\emph{Acknowledgments:} 
It is a pleasure for both authors to thank Rafe Mazzeo for his continuous support and many helpful discussions. 
The second author would also like to thank Matthias Lesch for valuable discussions on function 
analytic aspects of this work. Furthermore the authors would like to thank Simon Brendle for helpful remarks. 
The second author gratefully acknowledges financial support by the German Research Foundation DFG as well as 
the Hausdorff Institute in Bonn, and also thanks Stanford University for hospitality.

\section{Asymptotics of the heat kernel on simple edge spaces}\label{s-asymptotics}
Let $(M,g)$ be a simple edge space  with edge data $(B,F,\phi)$ 
and a singular neighbourhood $\calU\subset M$ of the edge $B$.
We pass from the singular space $\overline{M}$ to its resolution
$\widetilde{M}$, which is a manifold with boundary $Y=\partial \widetilde{M}$, obtained by replacing 
$\calU$, which is a bundle of cones, with the associated bundle of cylinders, where each fibre is now $[0,1)_x \times F$.
This resolution process is described in greater detail in \cite{Maz:ETO}.
The following summarizes classical geometric notions on singular edge manifolds,
see \cite{Maz:ETO}, also explained in \cite{MazVer:ATO} and repeated here for convenience of the reader. 

We describe adapted local coordinates $(x,y,z)$ on $\widetilde{M}$ near the boundary 
$Y=\partial \widetilde{M}$. Here $x$ is the radial function defining the boundary, $y$ is
the lift of a local coordinate system on $B^b$ and $z$ restricts to a 
local coordinate system on each fibre $F^f$. The class of edge
vector fields $\calV_e$ on $\widetilde{M}$ are those which are smooth and 
tangent to the fibres of $Y$ at $\partial \widetilde{M}$, given in the adapted coordinates by
a sum of smooth multiples of the generators $x\del_x$, $x\del_{y_i}$ 
and $\del_{z_j}$ with $i=1,...,b$ and $j=1,...,f$.

The edge vector fields define the class of differential edge operators 
$\textup{Diff}^*_e(M)$ acting on smooth functions over $M$. 
Any $L \in \textup{Diff}^*_e(M)$ is of the form
\[
L=\sum_{j+|\A|+|\beta|\leq n} a_{j,\A,\beta}(x,y,z)(x\partial_x)^j(x\partial_y)^{\A}\partial_z^{\beta},
\]
with each $a_{j,\alpha,\beta}$ smooth. 
The operator is called edge elliptic if its edge symbol
\[
{}^e \sigma_n(L)(x,y,z;\xi,\eta,\zeta) :=  \sum_{j + |\A| + |\beta| = n} a_{j,\A,\beta}(x,y,z) \xi^j \eta^{\A} \zeta^{\beta}
\]
is invertible for $(\xi,\eta,\zeta) \neq (0,0,0)$. 
The edge symbol also admits a global definition as a
function on a certain edge cotangent bundle ${}^e T^* \widetilde{M}$.
The asymptotic structure of solutions to an elliptic differential edge operator $L$ 
is encoded in its indicial operator, which acts on functions on $\RR^+ \times F$, for any fixed point $y_0\in B$
\[
I(L)_{y_0} := \sum_{j + |\beta| \leq m} a_{j,0,\beta}(0,y_0,z) (s\partial_s)^j \partial_z^\beta.
\]
This is equivalent, by taking the Mellin transform in the $s$ $(\in \RR^+$) variable, to a family of holomorphic operators on $F$, 
\[
I_\zeta(L)_{y_0} = \sum_{j + |\beta| \leq m} a_{j,0,\beta} (0,y_0,z) \zeta^j \partial_z^\beta.
\]
Values of $\zeta\in \C$ for which $I_\zeta(L)_{y_0}$ is not invertible (acting on $L^2(F)$) 
are called indicial roots of $L$; they are analogous
to eigenvalues for this operator family on the compact manifold $F$. 

As we now explain, for the scalar Laplacian $\Delta$ of a feasible edge space $(M,g)$,
$x^2\Delta \in \textup{Diff}^2_e(M)$, where we omit the subindex $g$ when there is no 
danger of confusion. Recall that the fibration $\phi:Y\to B$ is a 
Riemannian submersion with respect to the rigid part $g_0$ of the edge metric 
$g=g_0+h$, where $g_0\restriction \mathscr{U} = dx^2 + \phi^*\ha + x^2\ka$ and $|h|_{g_0}=O(x^2)$ 
as $x\to 0$. Consider the orthogonal splitting of $TY$ into vertical and horizontal subbundles
$$TY=\ker d\phi \oplus (\ker d\phi)^{\perp},$$
where $d\phi:(\ker d\phi)^{\perp} \to B$,
is an isomorphism and the horizontal subbundle $(\ker d\phi)^{\perp}$ is annihilated by $\ka$.
Under the identification $(\ker d\phi)^{\perp} \cong B$ the Laplacian $\Delta$ 
is given over the singular neighbourhood $\mathscr{U}$ by the following expression 
\begin{align}\label{lap-exp}
x^2\Delta\restriction \mathscr{U}=-(x\partial_x)^2 - (\dim F - 1)x \partial_x + \Delta_{F} + x^2\Delta_B +
x^2\mathscr{O}, \quad x^2\mathscr{O}\in x\textup{Diff}^2_e(M),
\end{align}
where $\Delta_F$ is the pullback of the Laplacian of $(F,\ka|_{\phi^{-1}(y)})$ 
to fibres of the fibration $\phi:Y\to B$, and $\Delta_B$ is the lift of 
the Laplacian for $(B,\ha)$. The higher order term $\mathscr{O}=\mathscr{O}_h+\mathscr{O}_\phi$ is comprised 
of the contributions $\mathscr{O}_h$ from $h$ as well as the contributions $\mathscr{O}_\phi$ from the 
second fundamental form and the curvature of the fibration $\phi$.  
Note that by the first condition of feasibility in Definition \ref{feasible}, we have $\mathscr{O}_h \in \textup{Diff}^2_e(M)$.  

The Mellin transform of the corresponding 
indicial operator is given by the holomorphic operator family on $F$ 
\[
I_\zeta(x^2\Delta)_{y_0}= -\zeta^2 - (\dim F - 1)\zeta + \Delta_{\ka,y_0}.
\]

The tangential operators $\Delta_{\ka,y_0}$ on $(F,\ka|_{\phi^{-1}(y_0)}), y_0\in B$, are spectrally equivalent 
and hence indicial roots of $x^2\Delta$ are $y_0$-independent. 

Denote the Friedrichs extension of the Laplacian on $(M,g)$ again by $\Delta$. 
We denote its heat operator by $e^{-t\Delta}$ and the corresponding heat kernel by $H$. 
The heat operator of $\Delta$ acts as an integral convolution operator on $u(t,\cdot)\in \dom (\Delta), t>0,$ 
\begin{equation} \label{eqn:hk-on-functions}
e^{-t\Delta}*u(t,p) = \int_0^t \int_M H\left( t-\wt, p,\widetilde{p} \right) u(\wt, \widetilde{p}) \dv (\widetilde{p}) \, d\wt,
\end{equation}
and solves the inhomogeneous heat problem
\begin{equation*}
(\partial_t + \Delta) w(t,p)  = u(t,p), \ w(0,p)=0,
\end{equation*}
for any $u(t,\cdot)\in \dom (\Delta), t>0$. The heat kernel can be viewed a priori 
as a distribution on $M^2_h=\R^+\times \widetilde{M}^2$, 
with the local coordinates near the corner in $M^2_h$ given by $(t, (x,y,z), (\widetilde{x}, \wy, \widetilde{z}))$, 
where $(x,y,z)$ and $(\widetilde{x}, \wy, \widetilde{z})$ are the coordinates on the two copies of $M$ near the singularity. 
The kernel $H(t, (x,y,z), (\wx,\wy,\wz))$ has non-uniform behaviour at the submanifolds
\begin{align*}
&A =\{ (t, (x,y,z), (\wx,\wy,\wz))\in M^2_h \mid t=0, \, x=\wx=0, \, y= \wy\}, \\
&D =\{ (t, p, \widetilde{p})\in M^2_h \mid t=0, \, p=\widetilde{p}\},
\end{align*}
which requires an appropriate blowup of the heat space $M^2_h$, 
such that the corresponding heat kernel lifts to a polyhomogeneous distribution 
in the sense of the following definition, also cited from \cite{Mel:TAP} in various other related publications, 
cf. \cite{MazVer:ATO}.

\begin{defn}\label{phg}
Let $\mathfrak{W}$ be a manifold with corners, with $\{(H_i,\rho_i)\}_{i=1}^N$ being an enumeration 
of its (embedded) boundaries and the corresponding defining functions. For any multi-index $b= (b_1,
\ldots, b_N)\in \C^N$ we write $\rho^b = \rho_1^{b_1} \ldots \rho_N^{b_N}$.  Denote by $\mathcal{V}_b(\mathfrak{W})$ the space
of smooth vector fields on $\mathfrak{W}$ which lie
tangent to all boundary faces. A distribution $\w$ on $\mathfrak{W}$ is said to be conormal,
if $\w$ is a restriction of a distribution across the boundary faces of $\mathfrak{W}$, 
$\w\in \rho^b L^\infty(\mathfrak{W})$ for some $b\in \C^N$ and $V_1 \ldots V_\ell \w \in \rho^b L^\infty(\mathfrak{W})$
for all $V_j \in \mathcal{V}_b(\mathfrak{W})$ and for every $\ell \geq 0$. An index set 
$E_i = \{(\gamma,p)\} \subset {\mathbb C} \times {\mathbb N_0}$ 
satisfies the following hypotheses:

\begin{enumerate}
\item $\textup{Re}(\gamma)$ accumulates only at plus infinity,
\item For each $\gamma$ there is $P_{\gamma}\in \N_0$, such 
that $(\gamma,p)\in E_i$ iff $p \leq P_\gamma$,
\item If $(\gamma,p) \in E_i$, then $(\gamma+j,p') \in E_i$ for all $j \in {\mathbb N_0}$ and $0 \leq p' \leq p$. 
\end{enumerate}
An index family $E = (E_1, \ldots, E_N)$ is an $N$-tuple of index sets. 
Finally, we say that a conormal distribution $\w$ is polyhomogeneous on $\mathfrak{W}$ 
with index family $E$, we write $\w\in \mathscr{A}_{\textup{phg}}^E(\mathfrak{W})$, 
if $\w$ is conormal and if in addition, near each $H_i$, 
\[
\w \sim \sum_{(\gamma,p) \in E_i} a_{\gamma,p} \rho_i^{\gamma} (\log \rho_i)^p, \ 
\textup{as} \ \rho_i\to 0,
\]
with coefficients $a_{\gamma,p}$ conormal on $H_i$, polyhomogeneous with index $E_j$
at any $H_i\cap H_j$. 
\end{defn}

The basic underlying idea of the heat-space blowup is to capture the well-known scaling 
property of the cone and the resulting polyhomogeneous behaviour of the heat kernel 
as its entries approach certain submanifolds of $M^2_h$ from the various angles. 
This is done by ``blowing" up these submanifolds, parabolically in time-direction, 
a procedure introduced by Melrose, see \cite{Mel:TAP} and in this particular case \cite{MazVer:ATO}. 

The blowup procedure corresponds to introducing certain (polar) coordinates on $M^2_h$, 
such that the heat kernel admits polyhomogeneous asymptotic expansions at $A$ and $D$ 
in these coordinates, together with a unique minimal differential structure with respect to 
which these coordinates are smooth. To get the correct blowup of $M^2_h$ we first parabolically 
(i.e. we treat $\sqrt{t}$ as a smooth variable) blow up the submanifold $A$.

The resulting heat-space $[M^2_h, A]$ is defined as the union of
$M^2_h\backslash A$ with the interior spherical normal bundle of $A$ in $M^2_h$. 
The blowup $[M^2_h, A]$ is endowed with the unique minimal differential structure 
with respect to which smooth functions in the interior of $M^2_h$ and polar coordinates 
on $M^2_h$ around $A$ are smooth. This blowup introduces a new boundary 
hypersurface $-$ the front face ff; the boundary faces $\{x=0\}, \{\wx=0\}$ and $\{t=0\}$
lift to rf (the right face), lf (the left face) and tf (the temporal face), respectively.  

The actual heat-space blowup $\mathscr{M}^2_h$ is obtained by a parabolic blowup 
of $[M^2_h, A]$ along the diagonal $D$ lifted to a submanifold of $[M^2_h, A]$. The resulting blowup $\mathscr{M}^2_h$ is 
defined as before by cutting out the submanifold and replacing it with its spherical 
normal bundle, which constitutes a new boundary face $-$ the temporal diagonal td. 
It is a manifold with boundaries and corners, visualized in Figure 1.

\begin{figure}[h]
\begin{center}
\begin{tikzpicture}
\draw (0,0.7) -- (0,2);
\draw[dotted] (-0.1,0.7) -- (-0.1, 2.2);
\node at (-0.4,2) {t};

\draw(-0.7,-0.5) -- (-2,-1);
\draw[dotted] (-0.69,-0.38) -- (-2.05, -0.9);
\node at (-2.05, -0.6) {$x$};

\draw (0.7,-0.5) -- (2,-1);
\draw[dotted] (0.69,-0.38) -- (2.05, -0.9);
\node at (2.05, -0.6) {$\wx$};

\draw (0,0.7) .. controls (-0.5,0.6) and (-0.7,0) .. (-0.7,-0.5);
\draw (0,0.7) .. controls (0.5,0.6) and (0.7,0) .. (0.7,-0.5);
\draw (-0.7,-0.5) .. controls (-0.5,-0.6) and (-0.4,-0.7) .. (-0.3,-0.7);
\draw (0.7,-0.5) .. controls (0.5,-0.6) and (0.4,-0.7) .. (0.3,-0.7);

\draw (-0.3,-0.7) .. controls (-0.3,-0.3) and (0.3,-0.3) .. (0.3,-0.7);
\draw (-0.3,-1.4) .. controls (-0.3,-1) and (0.3,-1) .. (0.3,-1.4);

\draw (0.3,-0.7) -- (0.3,-1.4);
\draw (-0.3,-0.7) -- (-0.3,-1.4);

\node at (1.2,0.7) {\large{rf}};
\node at (-1.2,0.7) {\large{lf}};
\node at (1.1, -1.2) {\large{tf}};
\node at (-1.1, -1.2) {\large{tf}};
\node at (0, -1.7) {\large{td}};
\node at (0,0.1) {\large{ff}};
\end{tikzpicture}
\end{center}
\label{heat-incomplete}
\caption{Heat-space Blowup $\mathscr{M}^2_h$.}
\end{figure}

The projective coordinates on $\mathscr{M}^2_h$ are then given as follows. 
Near the top corner of the front face ff, the projective coordinates are given by
\begin{align}\label{top-coord}
\rho=\sqrt{t}, \  \xi=\frac{x}{\rho}, \ \widetilde{\xi}=\frac{\wx}{\rho}, \ u=\frac{y-\wy}{\rho}, \ z, \ \wy, \ \wz,
\end{align}
where in these coordinates $\rho, \xi, \widetilde{\xi}$ are the defining 
functions of the boundary faces ff, rf and lf respectively. 
For the bottom corner of the front face near the right hand side projective coordinates are given by
\begin{align}\label{right-coord}
\tau=\frac{t}{\wx^2}, \ s=\frac{x}{\wx}, \ u=\frac{y-\wy}{\wx}, \ z, \ \wx, \ \wy, \ \widetilde{z},
\end{align}
where in these coordinates $\tau, s, \widetilde{x}$ are
the defining functions of tf, rf and ff respectively. 
For the bottom corner of the front face near the left hand side
projective coordinates are obtained by interchanging 
the roles of $x$ and $\widetilde{x}$. Projective coordinates 
on $\mathscr{M}^2_h$ near temporal diagonal are given by 
\begin{align}\label{d-coord}
\eta=\frac{\sqrt{t}}{\wx}, \ S =\frac{(x-\wx)}{\sqrt{t}}, \ 
U= \frac{y-\wy}{\sqrt{t}}, \ Z =\frac{\wx (z-\wz)}{\sqrt{t}}, \  \wx, \ 
\wy, \ \widetilde{z}.
\end{align}

In these coordinates tf is the face in the limit $|(S, U, Z)|\to \infty$, 
ff and td are defined by $\widetilde{x}, \eta$, respectively. 
The blow-down map $\beta: \mathscr{M}^2_h\to M^2_h$ is in 
local coordinates simply the coordinate change back to 
$(t, (x,y, z), (\widetilde{x},\wy, \widetilde{z}))$. The heat kernel 
lifts to a polyhomogeneous conormal distribution on $\mathscr{M}^2_h$
with product type expansions at the corners of the heat space, and 
with coefficients depending smoothly on the tangential variables. 
More precisely we have for asymptotically admissible edge metrics
the following

\begin{thm}\label{friedrichs-blowup}\textup{(\cite[Theorem 1.2]{MazVer:ATO})}
Let $(M^m,g)$ be a simple edge space with an asymptotically admissible edge metric $g$. Let the 
heat kernel associated to the Friedrichs extension of the corresponding Laplacian be denoted by $H$.
The lift $\beta^*H$ is a polyhomogeneous conormal distribution on $\mathscr{M}^2_h$ 
of leading order $(-\dim M)$ at the front face ff, leading order $(-\dim M)$ at the temporal diagonal td, and 
continuous at the left and right boundary faces. $\beta^*H$ vanishes to infinite order 
at the temporal face tf and does not admit logarithmic terms in its asymptotic expansion at ff and td. 
\end{thm} 

The heat kernel asymptotics were derived in \cite[Theorem 1.2]{MazVer:ATO} under a unitary rescaling 
transformation in the singular neighbourhood $\mathscr{U}=(0,1)$ of the edge singularity, 
similar to Br\"uning-Seeley \cite{BruSee:AIT}
$$\Phi:C_{0}^{\infty}(\mathscr{U})\to C_{0}^{\infty}(\mathscr{U}), \quad u\mapsto x^{f/2}u,$$
such that the transformed Laplacian $\Phi \circ \Delta \circ \Phi^{-1}$ is self-adjoint in 
$L^2\left(\mathscr{U}, dx\wedge \textup{vol}(\phi^*\ha + \ka)\right)$, where we omit factors in the volume
which are bounded and smooth up to $x=0$. In particular the leading order of the heat kernel asymptotics 
in \cite{MazVer:ATO} refer to the heat operator as an integral operator with respect to the volume 
$dx\wedge \textup{vol}\left(\phi^*\ha + \ka\right)$.  This accounts for the difference in powers of the defining 
functions between Theorem \ref{friedrichs-blowup} and \cite[Theorem 1.2]{MazVer:ATO}. 

To see this explicitly, we compare the actions of the heat operators with and without the unitary rescaling transformation. 
We may assume for simplicity that the heat kernel is compactly supported in $\mathscr{U}^2\times \R^+$  and 
for simplicity omitting factors in the volume which are bounded and smooth up to $x=0$, we find
\begin{align*}
\exp(-t \Phi \circ \Delta \circ \Phi^{-1})u=\int_{\mathscr{U}} 
\exp(-t\Phi \circ \Delta \circ \Phi^{-1})(x,y,z,\wx,\wy,\wz)u(\wx,\wy,\wz)  d\wx d\wy d\wz \\
= \int_{\mathscr{U}} x^{f/2} \exp(-t\Delta)(x,y,z,\wx,\wy,\wz) \wx^{-f/2}u(\wx,\wy,\wz) \wx^f  d\wx d\wy d\wz.
\end{align*}
Consequently we deduce the following relation between the heat kernels:
\begin{align*}
\exp(-t\Phi \circ \Delta \circ \Phi^{-1})(x,y,z,\wx,\wy,\wz) = (x\wx)^{f/2} \exp(-t\Delta)(x,y,z,\wx,\wy,\wz).
\end{align*}
One may now check in projective blowup coordinates that the lifts of the heat kernels to the blowup heat space are related as follows
\begin{align}\label{rescaling:heat}
\beta^*\exp(-t\Phi \circ \Delta \circ \Phi^{-1}) = \rho_{\textup{ff}}^f  (\rho_{\textup{rf}} \rho_{\textup{lf}})^{f/2}\beta^*\exp(-t\Delta),
\end{align}
where, e.g.,  $\rho_{\textup{rf}}$ is a defining function for the right face. The rescaled heat kernel $\exp(-t\Phi \circ \Delta \circ \Phi^{-1})$ has been discussed in \cite[Theorem 1.2]{MazVer:ATO} and in case of functions lifts to a density on the heat-space blowup with asymptotic expansion of leading order $(-1-b)$ at ff, of leading order $(f/2)$ at lf and rf, and of leading order $(-\dim M)$ at td. Employing the relation \eqref{rescaling:heat} we arrive at the actual statement of Theorem \ref{friedrichs-blowup}.

\begin{remark}
A similar situation arises in the special case of a cone, where the edge $B$ is collapsed to a point. This setup was considered by Mooers in \cite{Moo:HKA}, where the heat kernel was considered under a unitary rescaling transformation as well. Without the rescaling one needs to take the singular factor $x^f$ of the volume $x^f dx\wedge \textup{vol}g^{\partial M}(x)$ into account, leading to different asymptotic behaviour. 
\end{remark}

In fact, feasibility of the edge metric guarantees a finer result on 
the asymptotic behaviour of $H$ at the left and right boundary faces of $\mathscr{M}^2_h$.

\begin{prop}\label{heat-expansion}
Let $(M^m,g)$ be a feasible edge space  with edge data $(B^b,F^f,\phi)$. 
Then the lift $\beta^*H$ is a polyhomogeneous distribution on $\mathscr{M}^2_h$ with the
index set at rf given in terms of
$$
E=\{\gamma \geq 0 \mid  \gamma= -\frac{(f-1)}{2}+ \sqrt{\frac{(f-1)^2}{4}+ \lambda},
 \, \lambda \in \textup{Spec}\, \Delta_F\}.
$$
More precisely, $\beta^*H$ separates into two components $\beta^*H'$ and $\beta^*H''$ of leading order 
behaviour $(-m)$ and $(-m+1)$ at the front face, respectively; and
if $s$ is a defining function of the right face 
of the form \eqref{top-coord} or \eqref{right-coord} and 
$E^*=E\backslash\{0\}$, then the expansion of $\beta^*H'$ and $\beta^*H''$ takes the following form
\begin{equation*}
 \begin{split}
 &\beta^*H' \sim \sum_{\gamma \in E} s^{\gamma} a_{\gamma}(\beta^*H), \quad s \to 0, \\
 &\beta^*H'' \sim \sum_{l=2}^{\infty} s^la_l(\beta^*H)+
\sum_{\gamma \in E^*} \sum_{l=1}^{\infty}s^{\gamma+l} a_{\gamma,l}(\beta^*H),
\quad s \to 0,
 \end{split}
\end{equation*}
where $a_{0}(\beta^*H), a_{2}(\beta^*H)$ are constant in $z$.
\end{prop}

\begin{proof}
Recall the heat kernel construction in \cite{MazVer:ATO}. 
The initial approximate parametrix for the solution operator of $\mathcal{L}=(\partial_t+\Delta)$ 
is constructed in \cite{MazVer:ATO} by solving the heat equation to 
first order at the front face ff of $\mathscr{M}^2_h$. 
The restriction of the lift $\beta^*(t\mathcal{L})$ to ff is called the normal operator 
$N_{\ff}(t\mathcal{L})$ at the front face and is given in projective coordinates \eqref{right-coord} 
explicitly as follows
\[
N_{\ff}(t\mathcal{L})=\tau (\partial_{\tau} - \partial_s^2 - fs^{-1}\partial_s +s^{-2}\Delta_{F,z} + \Delta^{\R^b}_u) \\
=:\tau (\partial_{\tau} + \Delta^{\mathscr{C}(F)}_s + \Delta^{\R^b}_u).
\]

$N_{\ff}(t\mathcal{L})$ does not involve derivatives with respect to $(\wx, \wy,\wz)$ and hence acts 
tangentially to the fibres of the front face. Searching for an initial parametrix $H_0$, we solve 
the heat equation to first order at the front face, and note that 
\[
N_{\ff}(t\mathcal{L}\circ H_0)= N_{\ff}(t\mathcal{L}) \circ N_{\ff}(H_0) \equiv 0
\]
is the heat equation on the model edge $\mathscr{C}(F)\times \R^{b}_w$, 
where $\mathscr{C}(F)=\R^+_s\times F_z$ denotes the model cone. 
Consequently, the initial parametrix $H_0$ is defined by choosing $N_{\ff}(H_0)$ to 
equal the fundamental solution for the heat operator $N_{\ff}(t\calL)$, and extending 
$N_{\ff}(H_0)$ trivially to a neighbourhood of the front face.
Using the projective coordinates $(\tau,s,u,z,\wx,\wy,\wz)$ near $\ff$, see 
\eqref{right-coord}, we have
\begin{equation}\label{normal-heat-ops}
N_{\ff}(H_0) := H^{\mathscr{C}(F)}(\tau,s,z,1,\wz) H^{\RR^b}(\tau, u ,0),
\end{equation}
where $H^{\R^b}$ denotes the Euclidean heat kernel on $\R^b$, and $H^{\mathscr{C}(F)}$ is the heat kernel 
for the exact cone $\mathscr{C}(F)$, as studied by \cite{Che:SGO}, \cite{Les:OOF} and \cite{Moo:HKA}. 
Consequently, the index set $E_0$ for the asymptotic behaviour of $H_0$ at the right and left boundary faces 
is given in terms of the indicial roots $\gamma$ for the Laplacian $\Delta^{\mathscr{C}(F)}_s$ on the exact cone, i.e.
\begin{align*}
H_0 \sim \sum_{\gamma \in E} s^{\gamma} a_{\gamma}(H_0), \ s\to 0,
\end{align*}
with the leading coefficient $a_0(H_0)$ being harmonic on fibres and hence constant in $z$. 
Note that by condition (ii) of Definition \ref{feasible}, any $\gamma\neq 0$ is automatically $\gamma \geq 1+\A_0$.
The error of the initial parametrix $H_0$ is given by 
\begin{align*}
\beta^*(t\mathcal{L})H_0=\left(\tau \wx^2 \beta^* \mathscr{O}_h + 
\left[\beta^*(t\Delta_{g_0}) - \tau \Delta^{\mathscr{C}(F)}_s - \tau \Delta^{\R^b}_u\right]
\right) H_0=:P_0.
\end{align*}
Note that 
$\left[\beta^*(t\Delta_{g_0}) - \tau \Delta^{\mathscr{C}(F)}_s - \tau \Delta^{\R^b}_u\right]$ is 
of higher order in $s$ and is of higher order in $\wx$, since its normal operator at the front face is zero by construction. 
Using the fact that $a_0(H_0)$ is fibrewise constant, we find that the leading term 
in the $H_0$-expansion at $\rf$ is annihilated by $s\partial_s$ and $\partial_z$ (but not by $\partial_u$). 
Hence
\begin{align*}
\left[\beta^*(t\Delta_{g_0}) - \tau \Delta^{\mathscr{C}(F)}_s - \tau \Delta^{\R^b}_u\right] H_0 
&\sim \wx^{-m+1}\left(\sum_{l=0}^{\infty}s^l c_{l} +\sum_{\gamma \in E^*} \sum_{l=0}^{\infty}
s^{\gamma-1+l} c_{\gamma,l}\right),  \ s\to 0, \\
\left[\tau \wx^2 \beta^* \mathscr{O}_h \right] H_0 &\sim \wx^{-m+2}\left(\sum_{l=1}^{\infty}s^l d_{l} +\sum_{\gamma \in E^*} \sum_{l=0}^{\infty}
d^{\gamma+l} a_{\gamma,l}\right),  \ s\to 0,
\end{align*}
where the condition (iii) of Definition \ref{feasible} implies that $c_0$ is constant in $z$.
Therefore $P_0$ is of leading order $(-m+1)$ at the front face and 
its expansion at rf is given by 
\begin{align*}
P_0 \sim \sum_{l=0}^{\infty}s^l a_{l}(P_0) + 
\sum_{\gamma \in E^*} \sum_{l=0}^{\infty}
s^{\gamma-1+l} a_{\gamma,l}(P_0),  \ s\to 0,
\end{align*}
where $a_0(P_0)$ is fibrewise constant. 
The next step in the construction of the heat kernel involves adding a kernel $H_0'$ to $H_0$, 
such that the new error term is vanishing to infinite order at rf. 
In order to eliminate the term $s^\gamma a_\gamma$ in the asymptotic 
expansion of $P_0$ at rf, we only need to solve 
\begin{align}\label{indicial}
(-\partial_s^2- fs^{-1}\partial_s + s^{-2}\Delta_{F,z}) u = s^\gamma (\tau^{-1}a_\gamma).
\end{align}

This is because all other terms in the expansion of $t\mathcal{L}$ at rf lower the 
exponent in $s$ by at most one, while the indicial part lowers the exponent by two. 
The variables $(\tau, \w, \wx, \wy, \wz)$ enter the equation only as parameters. 
The equation on the model cone is solved via the Mellin transform and the solution is polyhomogeneous in 
all variables, including parameters and is of leading order $(\gamma+2)$. 
Consequently, the correcting kernel $H_0'$ must be of leading order $2$ at rf 
and of leading order $(-m+1)$ at ff, since $P_0$ is of order $(-m+1)$ at ff and
the defining function $\wx$ of the front face enters \eqref{indicial} only as a parameter. 

In case of $\gamma=0$, the indicial equation \eqref{indicial} reduces to 
$$
(-\partial_s^2- fs^{-1}\partial_s) u = s^0 (\tau^{-1}a_0),
$$
with $a_0$ and the solution $u= - s^2a_0(2+2f)^{-1}$ both independent of $z$. 
Hence, the leading order term in the expansion of $H_0'$ at rf is constant in $z$. 
Put $H_1:=H_0+H_0'$, where $H_0'$ is of leading order $(-m+1)$ at ff and expands near rf as follows
$$
H_0' \sim \sum_{l=2}^{\infty}s^la_l(H_0') + 
\sum_{\gamma \in E^*} \sum_{l=1}^{\infty}
s^{\gamma+l} a_{\gamma,l}(H_0'),  \ s\to 0,
$$
where we have explained that $a_{2}(H_0')$ is constant in $z$. The error 
of our new heat parametrix $H_1$ is now vanishing to infinite order at rf 
and is of leading order $(-m+1)$ at ff.

In the following correction steps the exact heat kernel is 
obtained from $H_1$ first by improving the error near td and then by an iterative correction procedure, 
adding terms of the form $H_1\circ (P_1)^k$, where $P_1:=t\mathcal{L}H_1$ 
is vanishing to infinite order at rf and td. 

Extend $s^\gamma a_\gamma(H_1)$ to a polyhomogeneous 
distribution on $\mathscr{M}^2_h$ with same index set at ff and lf as $H_1$, vanishing to 
infinite order at tf and td, and a single term $s^\gamma a_\gamma(H_1)$ in its expansion at rf. Then 
$s^\gamma a_\gamma(H_1)\circ (P_1)^k$ also makes sense, and since 
$\beta^*(x\partial_x - \gamma)s^\gamma a_\gamma(H_1)$ 
vanishes to infinite order at rf, so
$$
\beta^*(x\partial_x - \gamma) [s^\gamma a_\gamma(H_1)\circ (P_1)^k] \sim O(s^{\infty}), \ 
s\to 0.
$$ 

Consequently $s^\gamma a_\gamma(H_1)\circ (P_1)^k$ has a single $s^\gamma$
term in its expansion at the right face. Similarly, applying $\partial_z$, 
we show that the coefficients in the expansion of $s^0 a_0(H_1)\circ (P_1)^k$ and $s^2 a_2(H_1)\circ (P_1)^k$ 
at the rf are constant in $z$. We deduce that $\beta^*H=\beta^*H'+\beta^*H''$, with 
$\beta^*H'$ of leading order $(-m)$ at ff, $\beta^*H''$ of leading order $(-m+1)$ at ff, and
\begin{equation*}
 \begin{split}
 &\beta^*H' \sim \sum_{\gamma \in E} s^{\gamma} a_{\gamma}(\beta^*H), \quad s \to 0, \\
 &\beta^*H'' \sim \sum_{l=2}^{\infty} s^la_l(\beta^*H)+
\sum_{\gamma \in E^*} \sum_{l=1}^{\infty}s^{\gamma+l} a_{\gamma,l}(\beta^*H),
\quad s \to 0,
 \end{split}
\end{equation*}
where $a_{0}(\beta^*H), a_{2}(\beta^*H)$ are constant in $z$.
\end{proof}

In contrast to a similar statement in our previous work \cite{BDV:THE},
the main novelty here is the additional information on the higher order term
$a_2(\beta^*H)$. Its fibrewise constancy is the fundamental key
to the estimate of the $I_3$-integral at $\rf$ of $\mathscr{M}^2_h$ below.
The asymptotic expansions of $H'$ and $H''$ straightforwardly lead to the following

\begin{cor}\label{corr}
Consider a feasible edge space $(M,g)$ with $g=g_0+h$. Denote the heat kernel of the 
Friedrichs extension of the corresponding scalar Laplacian by $H$.
Write $$\Delta_{g_0} =: x^{-2}I(x^2\Delta_g) + L \circ \partial_y,$$
where the second summand is either a combination of $\{\partial_y, \partial_y^2\}$ coming from
$\ha$, or $\{\partial_z\partial_y\}$ coming from the curvature of the fibration. 
Let $\mathcal{V}'_e$ be the space of edge vector fields with local set of generators $\{x\partial_x, x\partial_y, \partial_z\}$, where we 
require the $x\partial_y$ coefficient to be fibrewise constant at $x=0$, and consider $X\in x^{-1}\mathcal{V}'_e$. Then
\begin{align*}
\beta^*(x^{-2}I(x^2\Delta_g)H) &\sim \rho_\ff^{-m-2} \rho_\td^{-m-2} \rho_\tf^\infty  \rho_\rf^\infty  +
\rho_\ff^{-m-1} \rho_\td^{-m-2} \rho_\tf^\infty (c_0 \rho_\rf^0 + O(\rho_\rf^{\A'_0})), \\
\beta^*(L H) &\sim \rho_\ff^{-m-1} \rho_\td^{-m-1} \rho_\tf^\infty (c_1 \rho_\rf^0 + O(\rho_\rf^{\A'_0})), \\
\beta^*(X H) &\sim \rho_\ff^{-m-1} \rho_\td^{-m-1} \rho_\tf^\infty (c_2 \rho_\rf^0 + O(\rho_\rf^{\A'_0})),
\end{align*}
where $c_{0,1,2}$ are fibrewise contant, i.e. independent of $z$; and $\A'_0:=\min\{\A_0,1\}$.
\end{cor}

\section{Parabolic Schauder-type estimates} \label{schauder-est}

The parabolic Schauder estimates here refer to boundedness property of the heat operator 
with respect to H\"older spaces, defined in the introductory Section \ref{intro}. 
The next auxiliary proposition identifies the H\"older space $\hho$ as a Banach space.

\begin{prop}\label{closed}
Let $X$ be a compact manifold with boundary, $C_0^{\infty}(X)$ smooth 
functions on $X$ with compact support in the interior and $\dom' (X)$ the 
space of distributions. Assume $H\subset \dom' (X)$ is a Banach space 
with norm $\| \cdot \|_H$ such that $C_0^{\infty}(X)\subset H$.
Let $D=(D_1,...,D_r)$ be a collection of 
differential operators on $X$, acting on elements of $H$ in the distributional 
sense.  Define
\begin{align*}
H_D:=\{u\in H \mid D_1 u, ... , D_r u\in H \}, \\
\|u\|_D:= \|u\|_H + \sum_{j=1}^r \|D_j u\|_H, \ u\in H.
\end{align*}
Then $(H_D,\| \cdot \|_D)$ is a Banach space.
\end{prop}
\begin{proof}
Let $(u_n)_{n\in \N} \subset H_D$ be a Cauchy sequence with respect to $\| \cdot \|_D$.
By completeness of $(H,\| \cdot \|_H)$, the sequences $(u_n)_{n\in \N}$ and 
$(D_j u_n)_{n\in \N}, j=1,...,r$ converge to $u\in H$ and $v_j\in H,j=1,...,r$, respectively. 
For any $\phi \in C_0^{\infty}(X)$ and $j=1,...,r$ we compute
\begin{align*}
(D_ju, \phi):=(u,D^t_j\phi)=\underset{n\to \infty}{\lim} (u_n,D^t_j \phi) 
=\underset{n\to \infty}{\lim} (D_j u_n,\phi) = (v_j,\phi).
\end{align*}
Hence, $D_ju=v_j\in H$ in the distributional sense. Consequently, $u\in H_D$ 
and therefore $(H_D,\| \cdot \|_D)$ is a Banach space. 
\end{proof}

Since the H\"older space $\ho$ and any collection 
of differential operators satisfies the conditions of Proposition \ref{closed}, 
the associated spaces $\hhok, k\in \N$ 
are Banach spaces as well.  Observe that by local parabolic regularity away from $x=0$, these spaces 
coincide with classical H\"older spaces in the interior, in addition to capturing the boundary behaviour 
which is needed for the analysis of the Yamabe flow.

We note that by using the product rule it is easy to see that $\hhok$ is closed under multiplication.  
We will also weight the spaces $\hhok$ with powers of $t$, note that $u \in t^\delta \hhok$ if and 
only if $u = t^\delta \widetilde{u}$ for an element $\widetilde{u} \in \hhok$.  

\begin{thm}\label{boundedness}
Let $(M^m,g)$ be a feasible edge space  with edge data $(B^b,F^f,\phi)$ 
and a singular neighbourhood $\calU\subset M$ of the edge $B$.
The incomplete edge metric is given by $g=g_0+h$ with 
\[
\left. g_0 \right|_{\calU} = dx^2 + \phi^*\ha +  x^2 \ka.
\]
Let $\lambda_0>f$ be the smallest non-zero eigenvalue of $\Delta_{\kappa, y},y\in B$. 
Put 
\[
 \A_0 := -\frac{(f-1)}{2}+\sqrt{\frac{(f-1)^2}{4}+\lambda_0}-1>0.
\]
Denote by $\Delta_g$ the Friedrichs extension of the Laplacian on $(M,g)$. Then for $\A\in (0,\A')$,
where $\A':=_0\min\{\A_0,1\}$, the heat operator $e^{-t\Delta_g}$ acts as a bounded convolution operator 
\begin{align*}
&e^{-t\Delta_g}: \ho \to \hho, \\
&e^{-t\Delta_g}: \ho \to \sqrt{t}\ho.
\end{align*}

\end{thm}

\begin{proof}
The proof we present is modeled on the classical estimation given in \cite{LSU:LAQ}.  Given $f \in \ho$, we use the asymptotics of the heat kernel to estimate each part of the H\"older norm of $e^{-t\Delta_g} * f$.  We break this into a number of steps, performing the more complicated estimations first, since they introduce techniques that are used throughout the proof.  By freezing the time variable we first estimate the H\"older seminorms in space, then by freezing the space variables we estimate the H\"older seminorms in time.  Note that in each of these steps we will have to localize and consider arguments near each corner of the blow-up heat space.  We finish by estimating the supremum norms at the end of the proof.

We will make frequent use of the fact that the edge heat kernel is stochastically complete, i.e. satisfies
\begin{align}\label{stoch-compl}
 \int_M H( t, p, \widetilde{p} ) \dv(\widetilde{p}) = 1, \; \mbox{for all} \; p \in M, t > 0, 
\end{align}
which allows us to introduce H\"older differences when needed.  This is a simple consequence of uniqueness of solutions to the heat equation. More precisely, with the Friedrichs extension $\Delta_g$ being a self-adjoint unbounded operator 
in the Hilbert space $L^2(M, \textup{vol}(g))$, the solutions $u(t,\cdot) \in \dom(\Delta_g)$ 
to the initial value problem
\begin{align*}
\partial_t u+\Delta_g u=0, \ u(0)=u_0\in \dom (\Delta_g) \subset L^2(M, \textup{vol}(g)),
\end{align*}
are unique and in fact given by $u(t)=e^{-t\Delta_g}u_0\in \dom (\Delta_g)$ for any $t>0$. 
Consequently, $u\equiv 1$ is the unique solution to the heat equation with the initial value 
$1\in \dom (\Delta_g)$, given in terms of the heat operator by $u\equiv 1=e^{-t\Delta_g}1$, 
note $\dim F\geq 1$. This is precisely the statement of stochastic completeness \eqref{stoch-compl}. 
The fact that $1\in \dom (\Delta_g)$ follows from \cite[\S 2.5]{MazVer:ATO} and is also elaborated in 
\cite{BDV:THE}.

Finally, we remark that the main idea of the proof in each coordinate system is to compare the asymptotics of the differentiated heat kernel with the powers of defining functions that arise from pulling the volume form back in coordinates.  The most difficult estimations that arise will leave an apparently singular factor of a defining function which is then appropriately estimated. Our proof has the following structure
\begin{center}
\begin{enumerate}
\item[] \S \ref{ho-space} Estimation of the H\"older difference in space.
\item[] \S \ref{ho-time} Estimation of the H\"older difference in time.
\item[] \S \ref{ho-sup} Estimation of the Supremum Norm.
\end{enumerate}
\end{center}

\subsection{Estimation of the H\"older difference in space}\label{ho-space} \ \\[-1mm]

Consider now for any $X\in \{\Delta_g, \ x^{-1}\mathcal{V}'_e\}$, $f\in \ho$ and any $p,p'\in M$
\begin{align*}
&X e^{-t\Delta_g}*f(t,p)-X e^{-t\Delta_g}*f(t,p')\\
=&\int_0^t\int_{M^+} X H(t-\wt, p, \widetilde{p})\left[ f(\wt, \widetilde{p})- f(\wt, p)\right] \,  \dv (\widetilde{p}) \, d\wt\\
-&\int_0^t\int_{M^+} X H(t-\wt, p', \widetilde{p})\left[ f(\wt, \widetilde{p})- f(\wt, p')\right] \,  \dv (\widetilde{p}) \, d\wt\\
+&\int_0^t\int_{M^-} \left[ X H(t-\wt, p, \widetilde{p}) -X H(t-\wt, p', \widetilde{p})\right] 
\left[ f(\wt, \widetilde{p})- f(\wt, p)\right] \,  \dv (\widetilde{p}) \, d\wt\\
-&\int_0^t\int_{M^-} X H(t-\wt, p', \widetilde{p})\left[ f(\wt, p)- f(\wt, p')\right] \,  \dv (\widetilde{p}) \, d\wt\\
=:&I_1-I_2+I_3-I_4,
\end{align*}
where we used \eqref{stoch-compl} and introduced the following notation
\begin{align*}
M^+:=\{\widetilde{p}\in M \mid d_M(p,\widetilde{p})\leq 2 d_M(p,p')\}, \\
M^-:=\{\widetilde{p}\in M \mid d_M(p,\widetilde{p})\geq 2 d_M(p,p')\}.
\end{align*}

Below we will lift the densities to the blowup space $\mathscr{M}^2_h$. 
We will identify the integration regions $M$, $M^+$ and $M^-$ with their corresponding lifts
without further comment.
We denote the four integrals above by $I_1, I_2, I_3$ and $I_4$ in the order of their appearance and 
estimate these integrals separately. Note that in case of $X=\partial_t$, we can employ the heat 
equation 
\[
\partial_t e^{-t\Delta_g} * f = - \Delta_g (e^{-t\Delta_g} * f) + f. 
\]
Hence indeed without loss of generality we can consider $X\in \{\Delta_g, \ x^{-1}\mathcal{V}'_e\}$. 
Moreover, we may assume $\A_0=\A'\leq 1$ without loss of generality, since for $\A_0>1$ we might as well set 
$\A_0=1$ in the estimates below.
\ \\[2mm]
\underline{\emph{Estimation of the first and second integrals $I_1, I_2$.}}\bigskip

We lift the heat kernel to a polyhomogeneous distribution $\beta^*H$ on the 
parabolic blowup of the heat space $\mathscr{M}^2_h$. 
The estimates in the interior of $\mathscr{M}^2_h$ are classical and hence we may assume 
that $\beta^*H$ is compactly supported in an open neighbourhood of the front face and 
estimate $I_1$ and $I_2$ near the various corners of ff.  
For any $X\in \{\Delta_g, \ x^{-1}\mathcal{V}_e\}$, we find 
as a crude consequence of Corollary \ref{corr}
\begin{align}
\label{XH}
\beta^*(XH)=(\rho_{\ff}\rho_{\td})^{-m-2} \rho_{\tf}^{\infty} G
\end{align}
where $G$ is a bounded polyhomogeneous distribution on $\mathscr{M}^2_h$.
We write down the estimates for $I_1$. The second integral $I_2$ is estimated along the same lines.
\\[3mm]
\textbf{Estimates near the lower left corner of the front face:}
Let us assume that the heat kernel $H$ is compactly supported near the lower left corner of the front face. 
Its asymptotic behaviour is appropriately described in the following projective coordinates
\begin{align*}
\tau=\frac{t-\wt}{x^2}, \ s=\frac{\wx}{x}, \ u=\frac{y-\widetilde{y}}{x}, \ x, \ y, \ z, \ \widetilde{z},
\end{align*}
where in these coordinates $\tau, s, x$ are the defining functions of tf, lf and ff respectively. 
The coordinates are valid whenever $(\tau, s)$ are bounded as $(t-\wt,x,\wx)$ approach zero. 
For the transformation rule of the volume form we compute
\begin{align*}
\beta^*(d\wt \dv(\wx, \wy, \wz)) =  h \cdot x^{m+2} s^f d\tau \, ds \, du \, d\wz,
\end{align*}
where $h$ is a bounded distribution on $\mathscr{M}^2_h$. 
Hence, using \eqref{XH} we arrive for any $f\in \ho$ after 
 cancellations at the estimate
\begin{equation*}
\begin{split}
|I_1|\leq C \, \|f\|_{\A} \, d_M((x,y,z), (x',y',z'))^{\A}.
\end{split}
\end{equation*}
\textbf{Estimates near the lower right corner of the front face:}
Let us assume that the heat kernel $H$ is compactly supported near the lower right corner of the front face. 
Its asymptotic behaviour is appropriately described in the following projective coordinates
\begin{align*}
\tau=\frac{t-\wt}{\wx^2}, \ s=\frac{x}{\wx}, \ u=\frac{y-\widetilde{y}}{\wx}, \ z, \wx, \ y, \ \widetilde{z},
\end{align*}
where in these coordinates $\tau, s, \wx$ are the defining functions of tf, rf and ff respectively. 
The coordinates are valid whenever $(\tau, s)$ are bounded as $(t-\wt,x,\wx)$ approach zero. 
For the transformation rule of the volume form we compute
\begin{align*}
\beta^*(d\wt \dv(\wx, \wy, \wz))=h \cdot \wx^{m+1} d\tau \, d\wx \, du \, d\wz,
\end{align*}
where $h$ is a bounded distribution on $\mathcal{M}^2_h$. 
Hence, using \eqref{XH} we find for any $f\in \ho$ after  cancellations
\begin{equation*}
\begin{split}
|I_1|\leq  C \|f\|_{\A} \int \int_{M^+} \wx^{-1} d_M((x,y,z), (\wx, \wy, \wz))^{\A} G \, d\tau \, d\wx \, du \, d\wz
\end{split}
\end{equation*}

The integration region lies within $M^+\cap \{x \leq \wx\}$, 
since we assume that the heat kernel is compactly supported near 
the lower right corner of the front face with $s<1$. Put
$d:=  d_M((x,y,z), (x',y',z'))$. Then for any $\wx \in M^+\cap \{x\leq \wx\}$ we find 
$|x-\wx|\leq 2d$ and $x\leq \wx$. Consequently
\begin{equation*}
\begin{split}
|I_1|&\leq  c \,  \|f\|_{\A} \int \int_x^{x+2d} \wx^{-1} d_M((x,y,z), (\wx, \wy, \wz))^{\A} G \, d\tau \, d\wx \, du \, d\wz \\
&\leq c' \, \|f\|_{\A} \int \int_x^{x+2d} \wx^{-1+\A} \left(|s-1|^2+|s+1|^2|z-\wz| + |u|^2\right)^{\frac{\A}{2}} G \, d\tau \, d\wx \, du \, d\wz \\
&\leq c'' \,  \|f\|_{\A} \left[(2d_M((x,y,z), (x',y',z'))+x)^{\A}-x^{\A}\right] \\
&= c'' \, \|f\|_{\A} \, d_M((x,y,z), (x',y',z'))^{\frac{\A}{2}} \left(
\left(2+x/d\right)^{\A}-\left(x/d\right)^{\A}\right).
\end{split}
\end{equation*}
The function $w(r)=(2+r)^\A-r^\A$ is bounded. 
Indeed 
\begin{align*}
(2+r)^\A-r^\A = r^\A \left(\left(1+\frac{2}{r}\right)^\A - 1\right) = 
2 r^{\A-1} \sum_{k=0}^\infty \binom{\A}{k} \left(\frac{2}{r}\right)^k \frac{\A-k}{k+1}= O(r^{\A-1}), 
\ r\to \infty.
\end{align*}  
Since $\A\in (0,1)$, boundedness follows and in we deduce
\begin{equation*}
\begin{split}
|I_1|\leq C \, \|f\|_{\A} \, d_M((x,y,z), (x',y',z'))^{\A}.
\end{split}
\end{equation*}
\\[3mm] \textbf{Estimates near the top corner of the front face:} 
Let us assume that the heat kernel $H$ is compactly supported near the top corner of the front face. 
Its asymptotic behaviour is appropriately described in the following projective coordinates
\begin{align*}
\rho=\sqrt{t-\wt}, \  \xi=\frac{x}{\rho}, \ \widetilde{\xi}=\frac{\widetilde{x}}{\rho}, 
\ u=\frac{y-\widetilde{y}}{\rho}, \ y, \ z, \ \widetilde{z},
\end{align*}
where in these coordinates $\rho, \xi, \widetilde{\xi}$ are the defining functions of the faces ff, lf and rf respectively. 
The coordinates are valid whenever $(\rho, \xi,\widetilde{\xi})$ are bounded as $(t-\wt,x,\w)$ approach zero. 
For the transformation rule of the volume form we compute
\begin{align*}
\beta^*(d\wt \dv(\wx, \wy, \wz))=h \cdot \rho^{m+1} \, \widetilde{\xi}^f \,  d\rho \, d\widetilde{\xi}\, du\, d\wz.
\end{align*}
where $h$ is a bounded distribution on $\mathcal{M}^2_h$. 
Hence, using \eqref{XH} and the fact that $G$ vanishes to infinite order as $|u| \to \infty$, 
we find for any $f\in \ho$ after  cancellations
\begin{equation*}
\begin{split}
|I_1| &\leq  c \, \|f\|_{\A} \int \int_{M^+} \rho^{-1+\A} \sqrt{(\xi-\widetilde{\xi})^2 + (\xi+\widetilde{\xi})^2 |z-\wz|^2 + |u|^2}^{\, \A} \, 
\widetilde{\xi}^f \,  G \, d\rho \, d\widetilde{\xi}\, du\, d\wz \\
&\leq c' \, \|f\|_{\A} \int \int_{M^+} \rho^{-1+\A} \, \widetilde{\xi}^f \, d\rho \, d\widetilde{\xi} 
= c' \, \|f\|_{\A}  \int \int_{M^+}  \wx^f \, \rho^{-2-f+\A} d\rho \, d\wx. 
\end{split}
\end{equation*}
We integrate within $\{\rho \geq \wx\}$, since $\widetilde{\xi}\leq 1$ near the top corner. 
Moreover, over $M^+$ we clearly have $|x-\wx|\leq 2d_M((x,y,z), (x',y',z'))=:2d$. Hence we find 
\begin{align*}
|I_1| &\leq c'' \, \|f\|_{\A}  \int_{|x-\wx|\leq 2d}  \wx^f \, \int_{\wx}^{\infty} \rho^{-2-f+\A} d\rho \, d\wx
\leq c'' \, \|f\|_{\A}  \int_{|x-\wx|\leq 2d}  \wx^{-1+\A} d\wx 
\end{align*}
We consider the cases $x\geq \wx$ and $x\leq \wx$ separately and find
\begin{equation*}
|I_1| \leq \left\{
\begin{split}
& c''' \, \|f\|_{\A} d^{\A} ((2+x/d)^\A-(x/d)^\A), \ \textup{if} \ x\leq \wx, \\
& c''' \, \|f\|_{\A} d^{\A} ((x/d)^\A-(x/d-2)^\A), \ \textup{if} \ x\geq \wx. 
\end{split} \right.
\end{equation*}
The functions $w(r)=(2+r)^\A-r^\A$ and $w(r)=r^\A - (r-2)^\A$ are both bounded as before,
and hence in both cases we deduce
\begin{equation*}
\begin{split}
|I_1|\leq C \, \|f\|_{\A} \, d_M((x,y,z), (x',y',z'))^{\A}.
\end{split}
\end{equation*}
\textbf{Estimates where the diagonal meets the front face:} 
We proceed using projective coordinates $(\tau, s, u)$ near the left corner, which are 
valid near td as well. $\eta:=\sqrt{\tau}$ is the defining function of td and $x$ the 
defining function of ff. For the transformation rule of the volume form we compute
\begin{align*}
\beta^*(d\wt \dv(\wx, \wy, \wz))=h \cdot \eta \, x^{m+2} s^f d\eta \, ds \, du \, d\wz,
\end{align*}
where $h$ is a bounded distribution on $\mathcal{M}^2_h$. 
Hence, using \eqref{XH} we find for any $f\in \ho$ after  cancellations
\begin{equation*}
\begin{split}
|I_1|\leq  c \|f\|_{\A} \int_{M^+} \eta^{-m-1} d_M((x,y,z), (\wx, \wy, \wz))^{\A} \, G \, d\eta \, ds \, du \, d\wz.
\end{split}
\end{equation*}

We estimate the distance 
\begin{equation*}
\begin{split}
d_M((x,y,z), (\wx, \wy, \wz)) &= \sqrt{|x-\wx|^2+(x+\wx)^2|z-\wz|^2 + |y-\wy|^2} \\
&= x \, \sqrt{|1-s|^2+(1+s)^2|z-\wz|^2 + |u|^2} \\ 
&\geq  x \, \sqrt{|1-s|^2+|z-\wz|^2 + |u|^2} =: x \, r(s,z,\wz, u).
\end{split}
\end{equation*}

On the other hand
\begin{equation*}
\begin{split}
d_M((x,y,z), (\wx, \wy, \wz))
&= x \, \sqrt{|1-s|^2+(1+s)^2|z-\wz|^2 + |u|^2} \\ 
&\leq 2 \, x \, \sqrt{|1-s|^2+|z-\wz|^2 + |u|^2} =2\, x \, r(s,z,\wz, u).
\end{split}
\end{equation*}

We transform $(s,u,\wz)$ around $(1,0,z)$ to polar coordinates with $r$ as the radial coordinate. 
After integration in the angular coordinates and substituting $\sigma = \eta / r$, we have
\begin{equation}
\label{sigma}
\begin{split}
|I_1|&\leq  c'' \|f\|_{\A} x^\A \int \int_{M^+} \sigma^{-m-1} r^{-1+\A} \, G \, d\sigma \, dr \\
&\leq c'' \|f\|_{\A} \, x^\A \int r^{-1+\A} \, dr  \int_0^{\infty}\sigma^{-m-1} \, G \, d\sigma,
\end{split}
\end{equation}
where the integration region in $r$ lies within $\{r\leq 2d_M((x,y,z), (x', y', z')) / x\}$.
The variable $\sigma$ is given in terms of the projective coordinates $(S,U,Z)$ near td, see \eqref{d-coord} 
as follows
\[
\sigma^{-1} = \frac{r}{\eta}=\sqrt{\left(\frac{x-\wx}{\sqrt{t-\wt}}\right)^2 +  
\left(\frac{y-\wy}{\sqrt{t-\wt}}\right)^2 + 
\left(\frac{\wx(z-\wz)}{\sqrt{t-\wt}}\right)^2} = \sqrt{|S|^2+|U|^2+|Z|^2}.
\]

Consequently the $\sigma$ integral in \eqref{sigma} is bounded, since $G$ is vanishing 
to infinite order as $|(S,U,Z)|\to \infty$. As the integration region in $r$ is within 
$\{r\leq 2d/x\}$, where $d:=d_M((x,y,z), (x', y', z'))$
\begin{equation*}
\begin{split}
|I_1| \leq  c''' \|f\|_{\A} \,  x^\A \int_0^{2d/x} r^{-1+\A} \, dr \leq C \, \|f\|_{\A} \, d_M((x,y,z), (x', y', z'))^\A.
\end{split}
\end{equation*}
\ \\[2mm]

\underline{\emph{Estimation of the third integral $I_3$.}}\bigskip

As in the previous estimates we lift the heat kernel to a polyhomogeneous distribution $\beta^*H$ on the 
parabolic blowup of the heat space $\mathscr{M}^2_h$, and
assume that $\beta^*H$ is compactly supported in an open neighbourhood of the front face.
Using the standard coordinates and the mean value theorem we obtain

\begin{equation*}
\begin{split}
I_3 &=  |x-x'|\int_0^t\int_{M^-} \partial_{\xi} X H(t-\wt, \xi, y, z, \wx, \wy, \wz) 
\left[ f(\wt, \wx, \wy, \wz)- f(\wt, x, y, z)\right] \,  \dv \, d\wt \\
&+\,  |y-y'|\int_0^t\int_{M^-} \partial_{\gamma} X H(t-\wt, x', \gamma, z, \wx, \wy, \wz) 
\left[ f(\wt, \wx, \wy, \wz)- f(\wt, x, y, z)\right] \,  \dv \, d\wt \\
&+\, |z-z'|\int_0^t\int_{M^-} \partial_{\zeta} X H(t-\wt, x', y', \zeta, \wx, \wy, \wz) 
\left[ f(\wt, \wx, \wy, \wz)- f(\wt, x, y, z)\right] \,  \dv \, d\wt,
\end{split}
\end{equation*}
for some intermediate values $(\xi, \gamma, \zeta)$.
Consider any $(x'',y'',z'')\in M$ with 
$$
d_M(x,y,z,x'',y'',z'')\leq d_M(x,y,z,x',y',z').
$$
Then for any $(\wx,\wy,\wz)\in M^-$ we compute 

\begin{equation*}
\begin{split}
2d_M(x,y,z,x',y',z') &\leq d_M(x, y, z,\wx, \wy, \wz) \\
&\leq d_M(x,y,z,x'',y'',z'') + d_M(x'',y'',z'',\wx, \wy, \wz) \\
&\leq d_M(x,y,z,x',y',z') + d_M(x'',y'',z'',\wx, \wy, \wz).
\end{split}
\end{equation*}
We deduce 

\begin{equation}
\label{inequa}
\begin{split}
d_M(x,y,z,x',y',z') &\leq d_M(x'',y'',z'',\wx, \wy, \wz),\\
d_M(x, y, z,\wx, \wy, \wz) &\leq 2 d_M(x'',y'',z'',\wx, \wy, \wz).
\end{split}
\end{equation}
Using the last inequality we may estimate $I_3$ for any $f\in \ho$ as follows
 
\begin{equation*}
\begin{split}
|I_3| &\leq 2 \, \|f\|_{\A} |x-x'| \int_0^t\int_{M^-} |\partial_{\xi} X H(t-\wt, \xi, y, z, \wx, \wy, \wz) |
d_M(\xi, y, z, \wx, \wy, \wz)^\A \,  \dv \, d\wt \\
&+\, 2 \, \|f\|_{\A} |y-y'|\int_0^t\int_{M^-} |\partial_{\gamma} X H(t-\wt, x', \gamma, z, \wx, \wy, \wz) |
d_M(x', \gamma, z, \wx, \wy, \wz)^\A \,  \dv \, d\wt \\
&+\, 2 \, \|f\|_{\A}|z-z'|\int_0^t\int_{M^-} |\partial_{\zeta} X H(t-\wt, x', y', \zeta, \wx, \wy, \wz) |
d_M(x', y', \zeta, \wx, \wy, \wz)^\A \,  \dv \, d\wt \\
&=: I'_3 + I''_3+I'''_3.
\end{split}
\end{equation*}

\ \\[1mm]
\textbf{Estimates near the lower left corner of the front face:}
Let us assume that the integrand in each of the components of $I_3$ is compactly supported 
near the lower left corner of the front face. Their asymptotic behaviour is appropriately 
described in the following projective coordinates
\begin{equation}
\label{lf-coordinates-I}
\begin{split}
&(I'_3) \quad \tau=\frac{t-\wt}{\xi^2}, \ s=\frac{\wx}{\xi}, \ u=\frac{y-\widetilde{y}}{\xi}, \ \theta=\xi, \ y, \ z, \ \widetilde{z}, \\
&(I''_3) \quad \tau=\frac{t-\wt}{(x')^2}, \ s=\frac{\wx}{x'}, \ u=\frac{\gamma-\widetilde{y}}{x'}, \ \theta= x', \ y, \ z, \ \widetilde{z}, \\
&(I'''_3) \quad \tau=\frac{t-\wt}{(x')^2}, \ s=\frac{\wx}{x'}, \ u=\frac{y'-\widetilde{y}}{x'}, \ \theta=x', \ y, \ \zeta, \ \widetilde{z}, \\
\end{split}
\end{equation}
where in these coordinates $\tau, s, \theta$ are the defining functions of tf, rf and ff respectively. 
The coordinates are valid whenever $(\tau, s)$ are bounded as $(t-\wt,\theta,\wx)$ approach zero. 
For the transformation rule of the volume form we compute
\begin{align*}
\beta^*(d\wt \dv(\wx, \wy, \wz)) =  h \cdot \theta^{m+2} d\tau \, ds \, du \, d\wz,
\end{align*}
where $h$ is a bounded distribution on $\mathscr{M}^2_h$. 
Hence, using \eqref{XH}, and regarding $(\wx,\wy)$ as functions of $(s,u, \xi)$, we arrive for any $f\in \ho$ after 
 cancellations at the estimate (note, differentiation in $\zeta$ does not lower the front face asymptotics)
\begin{equation*}
\begin{split}
|I'_3| &\leq c \, \|f\|_{\A} \, |x-x'| \, \xi^{-1+\A} \int \int_{M^-} (d_M(\xi, y, z,\wx, \wy, \wz)/\xi)^{\A} \, G \, d\tau \, ds \, du \, d\wz, \\
|I''_3| &\leq c \, \|f\|_{\A} \, |y-y'| \, (x')^{-1+\A} \int \int_{M^-} (d_M(x', \gamma, z,\wx, \wy, \wz)/x')^{\A} \, G \, d\tau \, ds \, du \, d\wz, \\
|I'''_3| &\leq c \, \|f\|_{\A} \, |z-z'| \, (x')^\A \int \int_{M^-} (d_M(x', y', \zeta,\wx, \wy, \wz)/x')^{\A} \, G \, d\tau \, ds \, du \, d\wz.
\end{split}
\end{equation*}
Note that in each case we can estimate the integrand using the appropriate projective coordinates
\begin{equation*}
\left.
\begin{split}
(I'_3) \quad &d_M(\xi, y, z,\wx, \wy, \wz)/\xi \leq 2 \sqrt{|1-s|^2+|z-\wz|+|u|^2} \\
(I''_3) \quad &d_M(x', \gamma, z,\wx, \wy, \wz)/x' \leq 2 \sqrt{|1-s|^2+|z-\wz|+|u|^2} \\
(I'''_3) \quad &d_M(x', y', \zeta, \wx, \wy, \wz)/x' \leq 2 \sqrt{|1-s|^2+|\zeta-\wz|+|u|^2}
\end{split}
\right\}
=:2 r(s,u,\wz).
\end{equation*}

By \eqref{inequa} the integration region $M^-$ lies within $\{r\geq d_M(x,y,z,x',y',z')/\theta\}$.
If $u$ is bounded, then $r$ is bounded near the left corner, so that $1 \leq c r^{-1}$ and hence we may estimate constants against as many negative powers of $r$ 
as we like. Similarly, if $|u|$ tends to infinity, then $r/|u|$ is bounded near the left corner, and we may introduce as 
many negative powers of $r/|u|$ as we like. The additional powers of $|u|$ are irrelevant, since $G$ vanishes to infinite order 
as $|u|\to \infty$. Hence we may estimate 
\begin{equation*}
\begin{split}
|I'_3| &\leq c' \, \|f\|_{\A} \, |x-x'| \, \xi^{-1+\A} (d_M(x,y,z,x',y',z')/\xi)^{-1+\A}, \\
|I''_3| &\leq c' \, \|f\|_{\A} \, |y-y'| \, (x')^{-1+\A} (d_M(x,y,z,x',y',z')/x')^{-1+\A}, \\
|I'''_3| &\leq c' \, \|f\|_{\A} \, |z-z'| \, (x')^\A (d_M(x,y,z,x',y',z')/x')^{-1+\A}.
\end{split}
\end{equation*}
Altogether we finally obtain
\begin{equation*}
\begin{split}
|I_3| &\leq c'' \, \|f\|_{\A} \, (|x-x'| +|y-y'| + x' |z-z'|)\, d_M(x,y,z,x',y',z')^{-1+\A} \\
& \leq c''' \, \|f\|_{\A} \, d_M(x,y,z,x',y',z')^{\A}.
\end{split}
\end{equation*}
\textbf{Estimates near the lower right corner of the front face:}
Let us assume that the integrand in each of the components of $I_3$ is compactly supported 
near the lower right corner of the front face. Their asymptotic behaviour is appropriately 
described in the following projective coordinates
\begin{equation*}
\begin{split}
&(I'_3) \quad \tau=\frac{t-\wt}{\wx^2}, \ s=\frac{\xi}{\wx}, \ u=\frac{y-\widetilde{y}}{\wx}, \ \theta=\xi, \ \wx, \ y, \ z, \ \widetilde{z}, \\
&(I''_3) \quad \tau=\frac{t-\wt}{\wx^2}, \ s=\frac{x'}{\wx}, \ u=\frac{\gamma-\widetilde{y}}{\wx}, \ \theta=x', \ \wx, \ y, \ z, \ \widetilde{z}, \\
&(I'''_3) \quad \tau=\frac{t-\wt}{\wx^2}, \ s=\frac{x'}{\wx}, \ u=\frac{y'-\widetilde{y}}{\wx}, \ \theta=x', \ \wx, \ y, \ \zeta, \ \widetilde{z}, \\
\end{split}
\end{equation*}
where in these coordinates $\tau, s, \wx$ are the defining functions of tf, lf and ff respectively. 
The coordinates are valid whenever $(\tau, s)$ are bounded as $(t-\wt,\theta,\wx)$ approach zero. 
For the transformation rule of the volume form we compute
\begin{align*}
\beta^*(d\wt \dv(\wx, \wy, \wz)) =  h \cdot \wx^{m+1} d\tau \, d\wx \, du \, d\wz,
\end{align*}
where $h$ is a bounded distribution on $\mathscr{M}^2_h$. 
Hence, using the careful estimates in Corollary \ref{corr} we arrive for any $f\in \ho$ after cancellations at the estimate 
(note, differentiation in $\zeta$ does not lower the front face asymptotics but kills the fibrewise constant leading terms)
\begin{equation*}
\begin{split}
|I'_3| &\leq c \, \|f\|_{\A} \, |x-x'| \, \int \int_{M^-} (\wx^{-2+\A} s^{\A_0}G_1 + \wx^{-1+ \A} s^{-1+\A_0}G_2) 
 (d_M(\xi, y, z,\wx, \wy, \wz)/\wx)^{\A}, \\
|I''_3| &\leq c \, \|f\|_{\A} \, |y-y'| \, \int \int_{M^-} \wx^{-2+\A} (d_M(x', \gamma, z,\wx, \wy, \wz)/\wx)^{\A} \, G \, d\tau \, d\wx \, du \, d\wz, \\
|I'''_3| &\leq c \, \|f\|_{\A} \, |z-z'| \, \int \int_{M^-} \wx^{-1+\A} s^{\A_0} (d_M(x', y', \zeta,\wx, \wy, \wz)/\wx)^{\A} \, G \, d\tau \, d\wx \, du \, d\wz,
\end{split}
\end{equation*}
with bounded polyhomogeneous kernels $G_1,G_2, G$.
We begin with the estimation of $I''_3$. Put $d:=d_M(x,y,z,x',y',z')$.
We separate the integration region into $M^-_1=M^-\cap \{\wx \leq d\}$ and $M^-_2=M^-\cap \{\wx \geq d\}$. Then 
using $|y-y'|\leq d$ and \eqref{inequa} we find
\begin{equation*}
\begin{split}
|I''_3| &\leq c' \, \|f\|_{\A} \, |y-y'| \, \int \int_{M^-_1} \wx^{-2+\A} (d_M(x', \gamma, z,\wx, \wy, \wz)/\wx)^{\A} \, G \, d\tau \, d\wx \, du \, d\wz \\
        & + c' \, \|f\|_{\A} \, |y-y'| \, \int \int_{M^-_2} \wx^{-2+\A} (d_M(x', \gamma, z,\wx, \wy, \wz)/\wx)^{\A} \, G \, d\tau \, d\wx \, du \, d\wz \\
        &\leq c'' \, \|f\|_{\A}\, \int \int_{M^-_1} \wx^{-1+\A} (d_M(x', \gamma, z,\wx, \wy, \wz)/\wx)^{1+\A} \, G \, d\tau \, d\wx \, du \, d\wz \\
        & + c'' \, \|f\|_{\A} \, d \, \int \int_{M^-_2} \wx^{-2+\A} (d_M(x', \gamma, z,\wx, \wy, \wz)/\wx)^{\A} \, G \, d\tau \, d\wx \, du \, d\wz.
\end{split}
\end{equation*}
Note that in the corresponding projective coordinates
\begin{equation}
\label{inequa2}
d_M(x', \gamma, z,\wx, \wy, \wz)/\wx \leq 2 \sqrt{|1-s|^2+|z-\wz|+|u|^2}=:2 r(s,u,\wz).
\end{equation}

If $u$ is bounded, then $r$ is bounded near the right corner. 
If $|u|$ tends to infinity, then $r/|u|$ is bounded near the right corner, 
and the additional powers of $|u|$ get absorbed by $G$ which vanishes 
to infinite order as $|u|\to \infty$. Hence we can estimate $\int rG \, du$ 
against a constant and find
\begin{equation*}
\begin{split}
|I''_3| \leq c''' \, \|f\|_{\A}\, \left( \int_0^d \wx^{-1+\A} \, d\wx + d \, \int_d^{\infty} \wx^{-2+\A} \, d\wx\right)
\leq C \, \|f\|_{\A} \, d^\A.
\end{split}
\end{equation*}
In order to estimate $I'''_3$, note that $s \wx = x'$ and hence 
\begin{equation*}
\begin{split}
|I'''_3| &\leq c' \, \|f\|_{\A} \, (x')^{\A_0}|z-z'| \, \left|\int_{x'}^{\infty} \wx^{-1-\A_0 +\A} d\wx \right|
\leq c'' \, \|f\|_{\A} \, |z-z'| \, (x')^{\A} \\
&\leq c'' \, \|f\|_{\A} \, (x+x')^{\A_0}|z-z'|^\A \leq C \, \|f\|_{\A} \, d^\A.
\end{split}
\end{equation*}
It remains to estimate $I'_3$.
\begin{equation*}
\begin{split}
|I'_3| & \leq c \, \|f\|_{\A} \, |x-x'| \, \int \int_{M^-} \wx^{-2+\A} s^{\A_0} (d_M(\xi, y, z,\wx, \wy, \wz)/\wx)^{\A} \, G_1 \, d\tau \, d\wx \, du \, d\wz \\
& + c \, \|f\|_{\A} \, |x-x'| \, \int \int_{M^-} \wx^{-1+\A} s^{-1+\A_0} (d_M(\xi, y, z,\wx, \wy, \wz)/\wx)^{\A} \, G_2 \, d\tau \, d\wx \, du \, d\wz.
\end{split}
\end{equation*}

The first integral is estimated precisely as $I''_3$. 
For the second integral we find (note $s\wx=\xi$)
using \eqref{inequa}
\begin{equation*}
\begin{split}
&c' \, \|f\|_{\A} \, |x-x'| \, \int \int_{M^-} \wx^{-1+\A} s^{-1+\A_0} (d_M(\xi, y, z,\wx, \wy, \wz)/\wx)^{\A} \, G_2 \, d\tau \, d\wx \, du \, d\wz \\
= \, &c'\|f\|_\A \, |x-x'|^{\A_0} \left( \frac{|x-x'|}{\xi} \right)^{1-\A_0} \int \int_{M^-} 
\wx^{-\A_0+\A} (d_M(\xi, y, z,\wx, \wy, \wz)/\wx)^{\A} \, G_2 \, d\tau \, d\wx \, du \, d\wz \\
\leq \, &c''\|f\|_\A \, |x-x'|^{\A_0} \int \int_{M^-} 
\wx^{-\A_0+\A} (d_M(\xi, y, z,\wx, \wy, \wz)/\wx)^{1-\A_0 +\A} \, G_2 \, d\tau \, d\wx \, du \, d\wz \\
\leq \, &c'''\|f\|_\A \, |x-x'|^{\A_0} \int_{0}^{1} \wx^{-\A_0+\A} d\wx 
\leq C \, \|f\|_{\A} \, d_M(x,y,z,x',y',z')^\A.
\end{split}
\end{equation*}
\textbf{Estimates near the top corner of the front face:}
Let us assume that the heat kernel $H$ is compactly supported near the top corner of the front face. 
Its asymptotic behaviour is appropriately described in the following projective coordinates
\begin{equation*}
\begin{split}
&(I'_3) \quad \rho=\sqrt{t-\wt}, \  \nu=\frac{\xi}{\rho}, \ \widetilde{\nu}=\frac{\wx}{\rho}, \ u=\frac{y-\wy}{\rho}, \ \theta=\xi, \ y, \ z, \ \wz, \\
&(I''_3) \quad \rho=\sqrt{t-\wt}, \  \nu=\frac{x'}{\rho}, \ \widetilde{\nu}=\frac{\wx}{\rho}, \ u=\frac{\gamma-\wy}{\rho}, \ \theta=x', \ \gamma, \ z, \ \wz, \\
&(I'''_3) \quad \rho=\sqrt{t-\wt}, \  \nu=\frac{x'}{\rho}, \ \widetilde{\nu}=\frac{\wx}{\rho}, \ u=\frac{y'-\wy}{\rho}, \ \theta=x',\ y', \ \zeta, \ \wz,
\end{split}
\end{equation*}
where in these coordinates $\rho, \nu, \widetilde{\nu}$ are the defining functions of the faces ff, lf and rf respectively. 
The coordinates are valid whenever $(\rho, \nu,\widetilde{\nu})$ are bounded as $(t-\wt,x, \xi, x', \wx)$ approach zero. 
For the transformation rule of the volume form we compute
\begin{align*}
\beta^*(d\wt \dv(\wx, \wy, \wz))=h \cdot \rho^{m+1} \, \widetilde{\nu}^f \,  d\rho \, d\widetilde{\nu}\, du\, d\wz.
\end{align*}
where $h$ is a bounded distribution on $\mathcal{M}^2_h$. 
As in the previous discussion near the right corner of the front face, we find for any $f\in \ho$ after  cancellations
\begin{equation*}
\begin{split}
|I'_3| &\leq c \, \|f\|_{\A} \, |x-x'| \, \int \int_{M^-} (\rho^{-2+\A} \nu^{\A_0}G_1 + \rho^{-1+\A} \nu^{-1+\A_0}G_1)  (d_M(\xi, y, z,\wx, \wy, \wz)/\wx)^{\A}, \\
|I''_3| &\leq c \, \|f\|_{\A} \, |y-y'| \, \int \int_{M^-} \rho^{-2+\A} (d_M(x', \gamma, z,\wx, \wy, \wz)/\wx)^{\A} \, G \, d\rho \, d\widetilde{\nu}\, du\, d\wz, \\
|I'''_3| &\leq c \, \|f\|_{\A} \, |z-z'| \, \int \int_{M^-} \rho^{-1+\A} \nu^{\A_0} (d_M(x', y', \zeta,\wx, \wy, \wz)/\wx)^{\A} \, G \, d\rho \, d\widetilde{\nu}\, du\, d\wz.
\end{split}
\end{equation*}
The integrals are estimated in exactly 
the same manner as near the right corner.

\ \\[2mm]\textbf{Estimates near the diagonal meets the front face:}
We proceed using projective coordinates \eqref{lf-coordinates-I} near the left corner, which are 
valid near td as well. $\eta:=\sqrt{\tau}$ is the defining function of td and $x$ the 
defining function of ff. For the transformation rule of the volume form we compute
\begin{align*}
\beta^*(d\wt \dv(\wx, \wy, \wz))=h \cdot \eta \, \theta^{m+2} s^f d\eta \, ds \, du \, d\wz,
\end{align*}
where $h$ is a bounded distribution on $\mathcal{M}^2_h$. 
Hence, using Corollary \ref{corr} we find for any $f\in \ho$ after  cancellations
(note, differentiation in $\zeta$ does not lower the front face asymptotics)
\begin{equation*}
\begin{split}
|I'_3| &\leq c \, \|f\|_{\A} \, |x-x'| \, \xi^{-1+\A} \int \int_{M^-} \eta^{-m-2} (d_M(\xi, y, z,\wx, \wy, \wz)/\xi)^{\A} \, G \, d\eta \, ds \, du \, d\wz, \\
|I''_3| &\leq c \, \|f\|_{\A} \, |y-y'| \, (x')^{-1+\A} \int \int_{M^-} \eta^{-m-2} (d_M(x', \gamma, z,\wx, \wy, \wz)/x')^{\A} \, G \, d\eta \, ds \, du \, d\wz, \\
|I'''_3| &\leq c \, \|f\|_{\A} \, |z-z'| \, (x')^\A \int \int_{M^-}  \eta^{-m-2} (d_M(x', y', \zeta,\wx, \wy, \wz)/x')^{\A} \, G \, d\eta \, ds \, du \, d\wz.
\end{split}
\end{equation*}
We estimate the distance term using appropriate projective coordinates
\begin{equation*}
\left.
\begin{split}
(I'_3) \quad &d_M(\xi, y, z,\wx, \wy, \wz)/\xi \leq 2 \sqrt{|1-s|^2 +|z-\wz|+|u|^2} \\
(I''_3) \quad &d_M(x', \gamma, z,\wx, \wy, \wz)/x' \leq 2 \sqrt{|1-s|^2 +|z-\wz|+|u|^2} \\
(I'''_3) \quad &d_M(x', y', \zeta, \wx, \wy, \wz)/x' \leq 2 \sqrt{|1-s|^2 +|\zeta-\wz|+|u|^2}
\end{split}
\right\}
=:2 r(s,u,\wz).
\end{equation*}

We transform $(s,u,\wz)$ around $(1,0,z)$ to polar coordinates with $r$ as the radial variable.  
After integration in the angular variables and substituting $\sigma = \eta / r$ we have
\begin{equation*}
\begin{split}
|I'_3| &\leq c \, \|f\|_{\A} \, |x-x'| \, \xi^{-1+\A} \int \int_{M^-} \sigma^{-m-2} r^{-2+\A} \, G \, d\sigma \, dr, \\
|I''_3| &\leq c \, \|f\|_{\A} \, |y-y'| \, (x')^{-1+\A} \int \int_{M^-} \sigma^{-m-2} r^{-2+\A} \, G \, d\sigma \, dr, \\
|I'''_3| &\leq c \, \|f\|_{\A} \, |z-z'| \, (x')^\A \int \int_{M^-}  \sigma^{-m-2} r^{-2+\A} \, G \, d\sigma \, dr.
\end{split}
\end{equation*}
where by \eqref{inequa} the integration region $M^-$ lies within $\{r\geq d_M(x,y,z,x',y',z')/\theta\}$. 
$\sigma$ is given in terms of the projective coordinates $(S,U,Z)$ near td, see \eqref{d-coord} 
as follows
\[
\sigma^{-1} = \sqrt{|S|^2+|U|^2+|Z|^2}.
\]

Consequently the $\sigma$ integral in \eqref{sigma} is bounded, since $G$ is vanishing 
to infinite order as $|(S,U,Z)|\to \infty$. We find, with the integration region in $r$ within 
$\{r\geq d_M(x,y,z,x',y',z')/\theta\}$
\begin{equation*}
\begin{split}
|I'_3| &\leq c' \, \|f\|_{\A} \, |x-x'| \, d_M(x,y,z,x',y',z')^{-1+\A} \\
|I''_3| &\leq c' \, \|f\|_{\A} \, |y-y'| \, d_M(x,y,z,x',y',z')^{-1+\A}, \\
|I'''_3| &\leq c' \, \|f\|_{\A} \, (x')|z-z'| \, d_M(x,y,z,x',y',z')^{-1+\A}.
\end{split}
\end{equation*}

Altogether we finally obtain
\begin{equation*}
\begin{split}
|I_3| &\leq c'' \, \|f\|_{\A} \, (|x-x'| +|y-y'| + x' |z-z'|)\, d_M(x,y,z,x',y',z')^{-1+\A} \\
& \leq c''' \, \|f\|_{\A} \, d_M(x,y,z,x',y',z')^{\A}.
\end{split}
\end{equation*}
\ \\[2mm]\underline{\emph{Estimation of the fourth integral $I_4$.}}\bigskip

First we employ stochastic completeness of the heat kernel and compute
\begin{equation*}
\begin{split}
I_4&=\int_0^t\int_{M^-} X H(t-\wt, p', \widetilde{p})\left[ f(\wt, p)- f(\wt, p')\right] \,  \dv (\widetilde{p}) \, d\wt \\
&= \int_0^t \left[ f(\wt, p)- f(\wt, p')\right]  \int_{M} X H(t-\wt, p', \widetilde{p}) \, \dv(\widetilde{p}) \,  d\wt \\
&-  \int_0^t \left[ f(\wt, p)- f(\wt, p')\right]  \int_{M^+} X H(t-\wt, p', \widetilde{p}) \, \dv(\widetilde{p})\,  d\wt \\
&= \int_0^t \left[ f(\wt, p)- f(\wt, p')\right] \int_{M^+} (- X H(t-\wt, p', \widetilde{p})) \, \dv(\widetilde{p})\,  d\wt. 
\end{split}
\end{equation*}

We lift the heat kernel to a polyhomogeneous distribution $\beta^*H$ on the 
parabolic blowup of the heat space $\mathscr{M}^2_h$, and as before 
assume that $\beta^*H$ is compactly supported in an open neighbourhood of the front face and 
estimate $I_4$ near the various corners of ff. Estimation near the lower left corner of the front face 
follows along the lines of $I_1$ estimates. The estimates near the top corner are parallel to 
those near the right corner. Therefore, we only explicate the estimates near the right corner and the diagonal. 

\ \\[2mm]\textbf{Estimates near the lower right corner of the front face:}
The asymptotic behaviour of $\beta^*H$ is appropriately described in the following projective coordinates
\begin{align*}
\tau=\frac{t-\wt}{\wx^2}, \ s=\frac{x'}{\wx}, \ u=\frac{y'-\widetilde{y}}{\wx}, \ z', \wx, \ y, \ \widetilde{z},
\end{align*}
where in these coordinates $\tau, s, \wx$ are the defining functions of tf, rf and ff respectively. 
The coordinates are valid whenever $(\tau, s)$ are bounded as $(t-\wt,x',\wx)$ approach zero.
For the transformation rule of the volume form we compute
\begin{align*}
\beta^*(d\wt \dv(\wx, \wy, \wz))=h \cdot \wx^{m+1} d\tau \, d\wx \, du \, d\wz,
\end{align*}
where $h$ is a bounded distribution on $\mathcal{M}^2_h$. 
By Corollary \ref{corr} we may write
$$
\beta^*(XH) = \wx^{-m-2}s^{\A_0} \tau^\infty G_1 + \wx^{-m-1} s^0 \tau^\infty G_2 + \beta^*(\partial_y  L H),
$$
with bounded polyhomogeneous kernels $G_1, G_2$.
We separate $I_4=I_4'+I_4''+ I_4'''$ correspondingly.
We find for any $f\in \ho$ after  cancellations
\begin{equation*}
\begin{split}
|I'_4| &\leq  c \, \|f\|_{\A} \, d_M((x,y,z), (x', y', z'))^{\A} \int_{M^+} \wx^{-1} s^{\A_0} G\, d\tau \, d\wx \, du \, d\wz \\
& \leq c' \, \|f\|_{\A} \, d_M((x,y,z), (x', y', z'))^{\A} (x')^{\A_0} \int_{x'}^{\infty} \wx^{-1-\A_0} d\wx \\
& \leq c'' \, \|f\|_{\A} \, d_M((x,y,z), (x', y', z'))^{\A}.
\end{split}
\end{equation*}

Estimates for $I_4''$ follow along the same lines. 
Consider $I_4'''$, where we exemplify the estimates by setting the integrand to $\partial_{y_j} LH$ for any $j=1,...,b$
and perform integration by parts in the corresponding $u_j\in \R$. For fixed $(\wx,\wz)$ the integration region is 
$u_j \in [-R(\wx,\wz),R(\wx,\wz)]$, where $|u_j|=R(\wx,\wz)$ corresponds to $(\wx,\wy,\wz)\in \partial M^+$. 
Write $\widehat{u}^{j}:=(u_1,..,\widehat{u}_j,..,u_b)\in \R^{b-1}$. We simplify the expressions by setting
$$
\delta f := f(t-\tau\wx^2, x,y,z)-f(t-\tau\wx^2, x',y',z').
$$
Integration by parts then gives
\begin{equation*}
\begin{split}
I'''_4 & = \int \delta f   \int_{M^+}\wx^{-1} \partial_{u_j} (\beta^*LH)\, \wx^{m+1} h(\wx, y-\wx u, \wz) \, d\tau \, d\wx \, du \, d\wz \\
& =   \int  \delta f \left. \int \wx^{m} \beta^*(LH) \, h(\wx, y-\wx u, \wz)
\right|_{|u_j|=R(\wx,\wz)} \, d\tau \, d\wx \, d\widehat{u}^j \, d\wz \\
& -  \int \delta f \int_{M^+}\wx^{m} \beta^*(LH) \, \partial_{u_j} h(\wx, y-\wx u, \wz) \, d\tau \, d\wx \, du \, d\wz \\
& = A + B.
\end{split}
\end{equation*}
Since $h$ is smooth in the edge variable, $\partial_u h(\wx, y-\wx u, \wz)=\wx \partial_y h(\wx, y-\wx u, \wz)$.
Hence, the integrand in $B$ is bounded in $\wx$ is and we deduce
after cancellations
$$
|B| \leq c \, \|f\|_{\A} \, d_M((x,y,z), (x', y', z'))^{\A}.
$$
For the first integral we note that for $|u_j|=R(\wx,\wz)$ we have $(\wx,\wy,\wz)\in \partial M^+$ so that 
\begin{align*}
&d_M((x,y,z), (\wx,\wy,\wz)) = 2 d_M((x,y,z), (x', y', z')), \\
&d_M((x', y', z'),(\wx,\wy,\wz)) \geq d_M((x,y,z), (x', y', z')),
\end{align*}
where the second relation follows from the first one using the triangle inequality. Using the second relation we find
\begin{equation*}
\begin{split}
|A| & \leq c \, \|f\|_{\A} \, d_M((x,y,z), (x', y', z'))^{\A} \left. \int \wx^{-1} G \right|_{|u_j|=R(\wx,\wz)} \, d\tau \, d\wx \, d\widehat{u}^j \, d\wz \\ & \leq  c \, \|f\|_{\A} \int \wx^{-1} d_M((x',y',z'), (\wx, \wy, \wz))^{\A} G \, d\tau \, d\wx \, d\widehat{u}^j \, d\wz
\end{split}
\end{equation*}
Proceeding exactly as for the estimate of $I_1$ at the right face, we obtain
$$
|A| \leq c \, \|f\|_{\A} \, d_M((x,y,z), (x', y', z'))^{\A}.
$$
\ \\[2mm] \textbf{Estimates where the diagonal meets the front face:}
The asymptotics of the heat kernel near the diagonal meeting the front face 
are conveniently described using the following projective coordinates
\begin{align}
\eta^2=\frac{t-\wt}{(x')^2}, \ S =\frac{(x'-\wx)}{\sqrt{t-\wt \ }}, \ 
U= \frac{y'-\wy}{\sqrt{t-\wt \ }}, \ Z =\frac{x' (z'-\wz)}{\sqrt{t-\wt \ }}, \  x', \ 
y', \ z'.
\end{align}

In these coordinates tf is the face in the limit $|(S, U, Z)|\to \infty$, 
ff and td are defined by $x', \eta$, respectively. 
For the transformation rule of the volume form we compute
\begin{align*}
\beta^*(d\wt \dv(\wx, \wy, \wz))=h \cdot (x')^{m+2} \eta^{m+1} (1-\eta S)^f d\eta \, dS \, dU \, dZ,
\end{align*}

where $h$ is a bounded distribution on $\mathcal{M}^2_h$. 
Hence, using \eqref{XH} we find for any $f\in \ho$ after  cancellations
\begin{equation*}
\begin{split}
|I_4|\leq  c \, \|f\|_{\A} \, d_M((x,y,z), (x', y', z'))^{\A} \int \int_{M^+} \eta^{-1} h G \, d\eta \, dS \, dU \, dZ.
\end{split}
\end{equation*}

We see that the crude estimate \eqref{XH} leads to an a priori singular behaviour in $\eta$.
In order to overcome this difficulty note first the following transformation rules for the edge
vector fields $\V$
\begin{align*}
\beta^*(x'\partial_{x'})=-2\eta \partial_\eta + \frac{1}{\eta}\partial_S + Z\partial_Z + x'\partial_{x'}, \quad 
\beta^*(x'\partial_{y'}) = \frac{1}{\eta}\partial_U + x' \partial_{y'}, \quad \beta^*(\partial_{z'}) = \frac{1}{\eta}\partial_Z + \partial_{z'}.
\end{align*}

We find that the singular behaviour in $\eta$ comes from differentiation in $(S,U,Z)$.
Hence we might as well consider the case $\beta^*(XH)=(x'\eta)^{-2}\partial^2_S \beta^*H$. 
Estimates of the other cases follow along the same lines. 
For fixed $(\eta, U,Z)$ the integration region is $\{|S|\leq R(\eta,U,Z)\}$, where $|S|=R(\eta, U, Z)\equiv R$ corresponds to $(\wx,\wy,\wz)\in \partial M^+$. We simplify the expressions below by setting 
$$
\delta f := f(t-(x'\eta)^2, x,y,z)-f(t-(x'\eta)^2, x',y',z').
$$
Integration by parts then gives
\begin{equation*}
\begin{split}
I_4 &= \int  \delta f \int_{M^+} (x'\eta)^{-2}\partial^2_S (\beta^*H) h 
 (x')^{m+2} \eta^{m+1} (1-\eta S)^f d\eta \, dS \, dU \, dZ \\
&=   \left. \int \delta f  \int_{\partial M^+}  (x'\eta)^{-2}\partial_S (\beta^*H) h 
 (x')^{m+2} \eta^{m+1} (1-\eta S)^f \right|_{|S|=R(\eta, U, Z)} d\eta \, dU \, dZ \\
&-   \int\delta f  \int_{M^+}  (x'\eta)^{-2}\partial_S (\beta^*H) \partial_S (h  (1-\eta S)^f)
 (x')^{m+2} \eta^{m+1} d\eta \, dS \,  dU \, dZ\\
&=: I'_4+I''_4,
\end{split}
\end{equation*}

The asymptotic behaviour of the heat kernel implies $\partial_S (\beta^*H)=(x'\eta)^{-m}G$, 
where $G$ is bounded and vanishes to infinite order as $|(S,U,Z)|\to \infty$. 
Using the fact that $h$ is a smooth function of $\wx=x'(1-\eta S)$ we find after  cancellations,
that $I''_4$ does not have singular behaviour in $\eta$ and estimate

\begin{equation*}
\begin{split}
|I''_4|\leq C \, \|f\|_{\A} \, d_M((x,y,z), (x', y', z'))^{\A}.
\end{split}
\end{equation*}
For the estimation of $I'_4$, we note that for $|S|=R(\eta, U, Z)$ we have $(\wx,\wy,\wz)\in \partial M^+$ so that 
\begin{align*}
&d_M((x,y,z), (\wx,\wy,\wz)) = 2 d_M((x,y,z), (x', y', z')), \\
&d_M((x', y', z'),(\wx,\wy,\wz)) \geq d_M((x,y,z), (x', y', z')), \\
&d_M((x', y', z'),(\wx,\wy,\wz)) \leq 3 d_M((x,y,z), (x', y', z')),
\end{align*}
where the second and third relations follow from the first one using the triangle inequality. We now transform to coordinates which are valid near the 
left corner of the front face and also near the diagonal

\begin{align*}
\eta^2=\frac{t-\wt}{(x')^2}, \ s=\frac{\wx}{x'}, \ u=\frac{y'-\widetilde{y}}{x'}, \ x', \ y', \ z', \ \widetilde{z},
\end{align*}
where in these coordinates $(\eta, s, x')$ are the defining functions of tf, lf and ff respectively. The coordinates are valid whenever $(\eta, s)$ are bounded as $(t-\wt, x',\wx)$ approach zero. The volume $dU \, dZ$ transforms as follows
\begin{align*}
dU \, dZ & =  \eta^{-m+1} du \, dz.
\end{align*}
Hence we find
\begin{equation*}
\begin{split}
|I'_4|&\leq c \, \|f\|_{\A} \, d_M((x,y,z), (x', y', z'))^{\A}  \left. \int
\eta^{-m} G \right|_{|S|=R} d\eta \, du \, d\wz \\
&\leq c' \, \|f\|_{\A} \, \left. \int
\eta^{-m} G d_M((x', y', z'),(\wx,\wz,\wz))^{\A}  \right|_{|S|=R} d\eta \, du \, d\wz 
\end{split}
\end{equation*}

Now we proceed exactly as before in the estimation of $I_1$. 
We pass from $(u,\wz)$ to polar coordinates around $(0,z')$ with the 
radial function 
\begin{align*}
r(u,z',\wz)=\sqrt{|z'-\wz|^2 + |u|^2}.
\end{align*}
Observe the following relation
$$
\frac{d_M((x', y', z'),(\wx,\wz,\wz))}{x'r(u,z',\wz)} \leq C'
\frac{\sqrt{|S|^2+|U|^2+|Z|^2}}{\sqrt{|U|^2+|Z|^2}}.
$$
Consequently substituting $\sigma=\eta / r$ 
and using the fact that $G$ is vanishing to infinite order as $|(S,U,Z)|\to \infty$ and hence also as $\sigma \to 0$, 
we find
\begin{equation*}
\begin{split}
|I'_4| &\leq c'' \, \|f\|_{\A} \, (x')^\A 
\left. \int_0^{\infty} \int r^{-1+\A} \, \sigma^{-m} 
\left(\frac{\sqrt{|S|^2+|U|^2+|Z|^2}}{\sqrt{|U|^2+|Z|^2}}\right)^{\A} \, G
\right|_{|S|=R} \, 
dr \, d\sigma \\
&\leq c''' \, \|f\|_{\A} d((x,y,z),(\wx,\wy,\wz))^\A.
\end{split}
\end{equation*}

\ \\[2mm] \underline{\emph{Estimation of the H\"older difference in space 
without differentiation.}}\bigskip

In view of the H\"older norm for $\hho$ 
we also need to estimate the H\"older difference in space without differentiating 
the heat kernel. In other words we need to establish mapping properties 
of the heat operator as a convolution operator between $\ho$ and itself.
Here we use mean value theorem, \eqref{stoch-compl} 
and estimating in a manner similar to $I_3$ above
\begin{equation*}
\begin{split}
&e^{-t\Delta_g}*f(t,x,y,z) - e^{-t\Delta_g}*f(t,x',y',z') \\
&=  |x-x'|\int_0^t\int_{M} \partial_{\xi} H(t-\wt, \xi, y, z, \wx, \wy, \wz) 
\left[ f(\wt, \wx, \wy, \wz)- f(\wt, x, y, z)\right] \,  \dv \, d\wt \\
&+\,  |y-y'|\int_0^t\int_{M} \partial_{\gamma} H(t-\wt, x', \gamma, z, \wx, \wy, \wz) 
\left[ f(\wt, \wx, \wy, \wz)- f(\wt, x, y, z)\right] \,  \dv \, d\wt \\
&+\, |z-z'|\int_0^t\int_{M} \partial_{\zeta} H(t-\wt, x', y', \zeta, \wx, \wy, \wz) 
\left[ f(\wt, \wx, \wy, \wz)- f(\wt, x, y, z)\right] \,  \dv \, d\wt.
\end{split}
\end{equation*}

The estimates required here are much simpler than those for $I_3$ as we apply two  fewer derivatives to the heat kernel. For brevity
we omit the straightforward computations which consist only of lifting 
the integrand to $\mathscr{M}^2_h$ and checking the powers of defining functions 
at the various boundary faces. Using $|\rho_\ff \rho_\td G|\leq C \sqrt{t}$
we find
\begin{align}
 \label{sqrt-time1}
\|e^{-t\Delta_g}*f(t,x,y,z) - e^{-t\Delta_g}*f(t,x',y',z')\|_\infty 
\leq C \sqrt{t} \|f\|_\A d_M((x,y,z),(x',y',z'))^\A.
\end{align}

\subsection{Estimation of the H\"older difference in time}\label{ho-time} \ \\[-1mm]

Assume $t>t'$ without loss of generality. Consider first the case $(2t'-t)\geq 0$. 
Then we consider for any $X\in \{\Delta_g, \ x^{-1}\mathcal{V}'_e\}$ the difference
\begin{align*}
&X e^{-t\Delta_g}*f(t,p) - X e^{-t\Delta_g}*f(t',p) \\
=&\int_{2t'-t}^t\int_{M} X H(t-\wt, p,\widetilde{p})\left[ f(\wt, \widetilde{p})- f(\wt, p)\right] \,  \dv (\widetilde{p}) \, d\wt\\
-&\int_{2t'-t}^{t'}\int_{M} X H(t'-\wt, p,\widetilde{p})\left[ f(\wt, \widetilde{p})- f(\wt, p)\right] \,  \dv (\widetilde{p}) \, d\wt\\
+&\int_0^{2t'-t}\int_{M} \left[ X H(t-\wt, p,\widetilde{p}) -X H(t'-\wt, p, \widetilde{p})\right] 
 \left[ f(\wt, \widetilde{p})- f(\wt, p)\right] \,  \dv (\widetilde{p}) \, d\wt\\
=:& J_1+J_2+J_3,
\end{align*}
where again we have used stochastic completeness of the heat kernel. 
Note that in case of $X=\partial_t$, we can employ the heat 
equation 
\[
\partial_t e^{-t\Delta_g} * f = - \Delta_g (e^{-t\Delta_g} * f) + f. 
\]
Hence without loss of generality we can consider 
$X\in \{\Delta_g, \ x^{-1}\mathcal{V}'_e\}$.

We lift the heat kernel to a polyhomogeneous distribution $\beta^*H$ on the 
parabolic blowup of the heat space $\mathscr{M}^2_h$. 
The estimates in the interior of $\mathscr{M}^2_h$ are classical and hence we may assume that $\beta^*H$ is compactly supported in an open neighbourhood of the front face and estimate $J_1$ and $J_2$ near the various corners of ff.  
By Proposition \ref{heat-expansion}  we find
\begin{align}
\label{XH2}
\beta^*(X H)=(\rho_{\ff}\rho_{\td})^{-m-2} \rho_{\tf}^{\infty} G
\end{align}
where $G$ is a bounded polyhomogeneous distributions on $\mathscr{M}^2_h$.
We write down the estimates for $J_1$ and $J_3$. The second integral $J_2$ is estimated along the lines of $J_1$. 
For $J_3$ we employ the mean value theorem and find for some $\theta \in (t',t)$
\begin{equation*}
\begin{split}
 J_3 &= \int_0^{2t'-t}\int_{M} \left[ X H(t-\wt, p,\widetilde{p}) -X H(t'-\wt, p, \widetilde{p})\right] 
 \left[ f(\wt, \widetilde{p})- f(\wt, p)\right] \,  \dv (\widetilde{p}) \, d\wt \\
     &= |t-t'| \int_0^{2t'-t}\int_{M} \partial_{\theta}X H(\theta-\wt, p,\widetilde{p}) 
 \left[ f(\wt, \widetilde{p})- f(\wt, p)\right] \,  \dv (\widetilde{p}) \, d\wt.
\end{split}
\end{equation*}
\textbf{Estimates near the lower left corner of the front face:}
Let us assume that the heat kernel $H$ is compactly supported near the lower left corner of the front face. 
Its asymptotic behaviour is appropriately described in the following projective coordinates
\begin{align*}
\tau=\frac{t-\wt}{x^2}, \ s=\frac{\wx}{x}, \ u=\frac{y-\widetilde{y}}{x}, \ x, \ y, \ z, \ \widetilde{z},
\end{align*}
where in these coordinates $\tau, s, x$ are the defining functions of tf, lf and ff respectively. 
The coordinates are valid whenever $(\tau, s)$ are bounded as $(t-\wt,x,\wx)$ approach zero. 
For the transformation rule of the volume form we compute
\begin{align*}
\beta^*(d\wt \dv(\wx, \wy, \wz)) =  h \cdot x^{m+2} s^f d\tau \, ds \, du \, d\wz,
\end{align*}
where $h$ is a bounded distribution on $\mathscr{M}^2_h$. 
Hence, using \eqref{XH2} we arrive for any $f\in \ho$ after 
 cancellations at the estimate (note that near the left corner $x^2 \geq (t-\wt)$)
\begin{equation*}
\begin{split}
|J_1|&\leq C \, \|f\|_{\A} \, \int x^\A (d_M((x,y,z), (\wx, \wy, \wz))/x)^{\A} \, G \, d\tau \, ds \, du \, d\wz \\
&\leq C' \, \|f\|_{\A} \, \int x^{-2+\A} \sqrt{|1-s|^2 + |z-\wz|^2 + |u|^2}^{\ \A} \, G \, d\wt \, ds \, du \, d\wz \\
&\leq C'' \, \|f\|_{\A} \, \int^{t}_{2t'-t} (t-\wt)^{-1+\frac{\A}{2}} d\wt = C''' \, \|f\|_{\A} \, (t-t')^{\frac{\A}{2}}.
\end{split}
\end{equation*}

We estimate $J_3$ using the projective coordinates as above where $t$ is replaced by $\theta$. 
Using that for $\wt \in [0,2t'-t]$ we have $|\theta - \wt| \geq |t'-\wt|$, since we have assumed $(2t'-t)\geq 0$. Hence we find
\begin{equation*}
\begin{split}
|J_3|&\leq C \, \|f\|_{\A} \, (t-t') \int x^{-2+\A} (d_M((x,y,z), (\wx, \wy, \wz))/x)^{\A} G d\tau \, ds \, du \, d\wz \\
&\leq C' \, \|f\|_{\A} \, (t-t') \int x^{-4+\A} \sqrt{|1-s|^2 + |z-\wz|^2 + |u|^2}^{\ \A} G d\wt \, ds \, du \, d\wz \\
&\leq C'' \, \|f\|_{\A} \, (t-t') \int^{2t'-t}_0 (\theta-\wt)^{-2+\frac{\A}{2}} d\wt \\
&\leq C'' \, \|f\|_{\A} \, (t-t') \int^{2t'-t}_0 (t'-\wt)^{-2+\frac{\A}{2}} d\wt  \leq C''' \, \|f\|_{\A} \, (t-t')^{\frac{\A}{2}}.
\end{split}
\end{equation*}
\textbf{Estimates near the lower right corner of the front face:}
Let us assume that the heat kernel $H$ is compactly supported near the lower right corner of the front face. 
Its asymptotic behaviour is appropriately described in the following projective coordinates
\begin{align*}
\tau=\frac{t-\wt}{\wx^2}, \ s=\frac{x}{\wx}, \ u=\frac{y-\widetilde{y}}{\wx}, \ z, \wx, \ \wy, \ \widetilde{z},
\end{align*}
where in these coordinates $\tau, s, \wx$ are the defining functions of tf, rf and ff respectively. 
The coordinates are valid whenever $(\tau, s)$ are bounded as $(t-\wt,x,\wx)$ approach zero. 
For the transformation rule of the volume form we compute
\begin{align*}
\beta^*(d\wt \dv(\wx, \wy, \wz))=h \cdot \wx^{m+1} d\tau \, d\wx \, du \, d\wz,
\end{align*}
where $h$ is a bounded distribution on $\mathcal{M}^2_h$. 
By Corollary \ref{corr} we may write
\begin{align*}
\beta^*(X H) = \wx^{-m-2} s^{\A_0} G' + \wx^{-m-1} G'' + \beta^*(\partial_y LH),
\end{align*}
with bounded polyhomogeneous kernels $G',G''$. 
Hence, we find for any $f\in \ho$ after cancellations and integration by parts 
for the third component (we omit the boundary terms there which are estimated similarly)
\begin{equation*}
\begin{split}
|J_1|&\leq C \, \|f\|_{\A} \, \int \left(\wx^{-1+\A}s^{\A_0}G' + \wx^{\A}G'''\right) 
(d_M((x,y,z), (\wx, \wy, \wz))/x)^{\A} d\tau \, d\wx \, du \, d\wz \\
&\leq C' \, \|f\|_{\A} \, \int \left(\wx^{-3+\A}s^{\A_0}G' + \wx^{-2+\A}G'''\right) 
\sqrt{|1-s|^2 + |z-\wz|^2 + |u|^2}^{\ \A}  d\wt \, d\wx \, du \, d\wz,
\end{split}
\end{equation*}
for some polyhomogeneous kernel $G'''$.
Using the fact that $G', G'''$ vanishe to infinite order as $|u|\to \infty$ and $\wx \geq x$ near rf, we find
\begin{align*}
\int \left(\wx^{-1}s^{\A_0}G' + G''\right) 
\sqrt{|1-s|^2 + |z-\wz|^2 + |u|^2}^{\ \A}  d\wx \, du \, d\wz 
\leq x^{\A_0}\int_x^{\infty} \wx^{-1-\A_0} d\wx + c \leq c'.
\end{align*}
Consequently, using $\wx^2 \geq (t-\wt)$ we find
\begin{equation*}
\begin{split}
|J_1| &\leq C' \, \|f\|_{\A} \, \int \left(\wx^{-3+\A}s^{\A_0}G' + \wx^{-2+\A}G'''\right) 
\sqrt{|1-s|^2 + |z-\wz|^2 + |u|^2}^{\ \A}  d\wt \, d\wx \, du \, d\wz \\
&\leq C'' \, \|f\|_{\A} \, \int^{t}_{2t'-t} (t-\wt)^{-1+\frac{\A}{2}} d\wt 
\leq C''' \, \|f\|_{\A} \, (t-t')^{\frac{\A}{2}}.
\end{split}
\end{equation*}
We estimate $J_3$ using the projective coordinates as above where $t$ is replaced by 
$\theta$. Using $|\theta - \wt| \geq |t-t'|$, we find as before
\begin{equation*}
\begin{split}
|J_3|&\leq C \, \|f\|_{\A} \, (t-t') \int \left(\wx^{-3+\A}s^{\A_0}G' + \wx^{-2+\A}G'''\right) 
(d_M((x,y,z), (\wx, \wy, \wz))/x)^{\A} d\tau \, d\wx \, du \, d\wz \\
&\leq C' \, \|f\|_{\A} \, (t-t') \int \left(\wx^{-5+\A}s^{\A_0}G' + \wx^{-4+\A}G'''\right) 
\sqrt{|1-s|^2 + |z-\wz|^2 + |u|^2}^{\ \A}  d\wt \, d\wx \, du \, d\wz \\
&\leq C'' \, \|f\|_{\A} \, (t-t') \int^{2t'-t}_0 (\theta-\wt)^{-2+\frac{\A}{2}} d\wt 
\leq C''' \, \|f\|_{\A} \, (t-t')^{\frac{\A}{2}}.
\end{split}
\end{equation*}
\textbf{Estimates near the top corner of the front face:} 
Let us assume that the heat kernel $H$ is compactly supported near the top corner of the front face. 
Its asymptotic behaviour is appropriately described in the following projective coordinates
\begin{align*}
\rho=\sqrt{t-\wt}, \  \xi=\frac{x}{\rho}, \ \widetilde{\xi}=\frac{\widetilde{x}}{\rho}, 
\ u=\frac{y-\widetilde{y}}{\rho}, \ y, \ z, \ \widetilde{z},
\end{align*}
where in these coordinates $\rho, \xi, \widetilde{\xi}$ are the defining functions of the faces ff, lf and rf respectively. 
The coordinates are valid whenever $(\rho, \xi,\widetilde{\xi})$ are bounded as $(t-\wt,x,\wx)$ approach zero. 
For the transformation rule of the volume form we compute
\begin{align*}
\beta^*(d\wt \dv(\wx, \wy, \wz))=h \cdot \rho^{m+1} \, \widetilde{\xi}^f \,  d\rho \, d\widetilde{\xi}\, du\, d\wz.
\end{align*}
where $h$ is a bounded distribution on $\mathcal{M}^2_h$. 
Hence, using \eqref{XH2} we find for any $f\in \ho$ after  cancellations
\begin{equation*}
\begin{split}
|J_1|&\leq C \, \|f\|_{\A} \, \int \rho^{-1+\A} (d_M((x,y,z), (\wx, \wy, \wz))/\rho)^{\A} G d\rho \, d\widetilde{\xi} \, du \, d\wz \\
&\leq C' \, \|f\|_{\A} \, \int_0^{\sqrt{2(t-t')}} \rho^{-1 +\A} d\rho  = C''' \, \|f\|_{\A} \, (t-t')^{\frac{\A}{2}}.
\end{split}
\end{equation*}
We estimate $J_3$ using the projective coordinates as above where $t$ is replaced by $\theta$. 
Using $|\theta - \wt| \geq |t-t'|$, we find as before
\begin{equation*}
\begin{split}
|J_3|&\leq C \, \|f\|_{\A} \, (t-t') \int \rho^{-3+\A} (d_M((x,y,z), (\wx, \wy, \wz))/\rho)^{\A} G d\rho \, d\widetilde{\xi} \, du \, d\wz \\
&\leq C' \, \|f\|_{\A} \, (t-t') \int_{\sqrt{(t-t')}}^{\infty} \rho^{-3 +\A} d\rho  = C''' \, \|f\|_{\A} \, (t-t')^{\frac{\A}{2}}.
\end{split}
\end{equation*}
\textbf{Estimates where the diagonal meets the front face:} 
The asymptotics of the heat kernel near the diagonal meeting the front face 
is conveniently described using the following projective coordinates
\begin{align*}
\eta^2=\frac{t-\wt}{x^2}, \ S =\frac{(x-\wx)}{\sqrt{t-\wt \ }}, \ 
U= \frac{y-\wy}{\sqrt{t-\wt \ }}, \ Z =\frac{x (z-\wz)}{\sqrt{t-\wt \ }}, \  x, \ 
y, \ z.
\end{align*}

In these coordinates tf is the face in the limit $|(S, U, Z)|\to \infty$, 
ff and td are defined by $x, \eta$, respectively. 
For the transformation rule of the volume form we compute
\begin{align*}
\beta^*(d\wt \dv(\wx, \wy, \wz))=h \cdot x^{m+2} \eta^{m+1} (1-\eta S)^f d\eta \, dS \, dU \, dZ,
\end{align*}
where $h$ is a bounded distribution on $\mathcal{M}^2_h$. 
Hence, using \eqref{XH2} we find for any $f\in \ho$ after  cancellations
\begin{equation*}
\begin{split}
|J_1|&\leq C \, \|f\|_{\A} \, x^\A \, \int \eta^{-1+\A} (d_M((x,y,z), (\wx, \wy, \wz))/\eta x)^{\A} G d\eta \, dS\, dU\, dZ \\
&\leq C' \, \|f\|_{\A} \, x^\A \int \eta^{-1 +\A} \sqrt{|S|^2+|U|^2+|Z|^2} G d\eta \, dS\, dU\, dZ  \\
&\leq C'' \, \|f\|_{\A} \, x^\A \int_{0}^{\sqrt{\frac{2(t-t')}{x^2}}} \eta^{-1 +\A} d\eta = C''' \, \|f\|_{\A} \, (t-t')^{\frac{\A}{2}}.
\end{split}
\end{equation*}
We estimate $J_3$ using the projective coordinates as above where $t$ is replaced by $\theta$. 
Using $|\theta - \wt| \geq |t-t'|$, we find as before
\begin{equation*}
\begin{split}
|J_3|&\leq C \, \|f\|_{\A} \, (t-t') x^{-2+\A} \int \eta^{-3 +\A} (d_M((x,y,z), (\wx, \wy, \wz))/\eta x)^{\A} G d\eta \, dS\, dU\, dZ \\
&\leq C' \, \|f\|_{\A} \, (t-t') x^{-2+\A} \int \eta^{-3 +\A} \sqrt{|S|^2+|U|^2+|Z|^2} G d\eta \, dS\, dU\, dZ  \\
&\leq C'' \, \|f\|_{\A} \, (t-t') x^{-2+\A} \int_{\sqrt{\frac{t-t'}{x^2}}}^{\infty} \eta^{-3 +\A} d\eta = C''' \, \|f\|_{\A} \, (t-t')^{\frac{\A}{2}}.
\end{split}
\end{equation*}
This concludes the estimation near the diagonal assuming $2t'-t \geq 0$.
Finally, if $(2t'-t)<0$ then we consider for any 
$X\in \{\Delta_g, \ x^{-1}\V\}$ the difference
\begin{align*}
&X e^{-t\Delta_g}*f(t,p) - X e^{-t\Delta_g}*f(t',p) \\
=&\int_{0}^t\int_{M} X H(t-\wt, p,\widetilde{p})\left[ f(\wt, \widetilde{p})- f(\wt, p)\right] \,  \dv (\widetilde{p}) \, d\wt\\
-&\int_{0}^{t'}\int_{M} X H(t'-\wt, p,\widetilde{p})\left[ f(\wt, \widetilde{p})- f(\wt, p)\right] \,  \dv (\widetilde{p}) \, d\wt\\
=:& J'_1+J'_2,
\end{align*}
where we have used stochastic completeness of the heat kernel as before.
The integrals $J'_1,J'_2$ are estimated exactly as before the respective integrals  $J_1,J_2$, 
using that for $(2t'-t)<0$ we have $t,t'\leq 2|t-t'|$.

\ \\[2mm]\underline{\emph{Estimation of the H\"older difference in time
without differentiation.}}\bigskip

As before in the parallel case of estimating 
the H\"older difference in space without differentiation,
we also need to estimate the H\"older difference in time without differentiating 
the heat kernel. In other words we need to establish mapping properties 
of the heat operator as a convolution operator between $\ho$ and $\ho$.
Here we use mean value theorem, \eqref{stoch-compl} and estimate as in $J_3$ above $(t'<t)$
\begin{equation*}
\begin{split}
&e^{-t\Delta_g}*f(t,x,y,z) - e^{-t'\Delta_g}*f(t',x,y,z) \\
&=  (t-t')\int_0^t\int_{M} \partial_{\theta} H(\theta - \wt, x, y, z, \wx, \wy, \wz) 
\left[ f(\wt, \wx, \wy, \wz)- f(\theta, x, y, z)\right] \,  \dv \, d\wt.
\end{split}
\end{equation*}

The estimates required here are much simpler than those for $J_3$
as we estimate with two fewer derivatives applied to the heat kernel. For brevity
we omit the straightforward computations which consist only of lifting 
the integrand to $\mathscr{M}^2_h$ and checking the powers of defining functions 
at the various boundary faces. Using $|t-t'|\leq \sqrt{t} \, |t-t'|^{\frac{\A}{2}}$
we find
\begin{align}
 \label{sqrt-time2}
\|e^{-t\Delta_g}*f(t,x,y,z) - e^{-t'\Delta_g}*f(t',x,y,z) \|_{\A/2} 
\leq C \sqrt{t} \|f\|_\A |t-t'|^{\frac{\A}{2}}.
\end{align}

\subsection{Estimation of the Supremum Norm}\label{ho-sup}  \ \\[-1mm]

For any $X\in \{\Delta_g, \ x^{-1}\V\}$ stochastic 
completeness of the heat kernel yields
\begin{align*}
|X e^{-t\Delta_g}*f| \leq \|f\|_\A \int_0^t \int_M X H(t-\wt, p,\widetilde{p})
d_M(p,\widetilde{p})^\A \textup{dvol}_g(\widetilde{p}) \, d\wt.
\end{align*} 

As before, the case $X=\partial_t$ can be reduced to 
$X\in \{\Delta_g, \ x^{-1}\V\}$ using the heat equation. The integral above can be estimated against a constant in a straightforward ways using Theorem \ref{friedrichs-blowup} and Proposition \ref{heat-expansion}, in the projective coordinates near the various corners of the front face. The estimates near the diagonal are not so straightforward but follow the arguments for estimation of $I_1$ near the diagonal along the same lines.

In the case where $X = \textup{id}$, we cannot use stochastic completeness of the heat kernel, but the estimates are straightforward.
The proof of the Theorem \ref{boundedness} is now complete. \\
\end{proof} 

\begin{cor}
\label{t-gain-cor}
Under the conditions of Theorem \ref{boundedness} we also have for any $k\in \N_0$
\begin{align*}
&e^{-t\Delta_g}: \hhok \to \hhokk, \\
&e^{-t\Delta_g}: \hhok \to \sqrt{t} \hhok.
\end{align*}

\end{cor}

\begin{proof}
First note that by definition 
\begin{align*}
\hho \subset \dom (\Delta_{g,\max}) :=
\{f\in L^2(M,g) \mid \Delta_g f \in L^2(M,g)\}, 
\end{align*}
where $\Delta_g f\in L^2(M,g)$ is understood in the distributional sense.
The boundary conditions for the Friedrichs extension $\Delta_g$ have been 
discussed in (\cite{MazVer:ATO}, \S 2.5). The characterization in 
 (\cite{MazVer:ATO}, Proposition 2.5) implies in particular that any
continuous element of the maximal domain $\dom (\Delta_{g,\max})$ 
automatically lies in the Friedrichs domain $\dom (\Delta_{g})$. Hence 
$\hho \subset \dom (\Delta_{g})$ and similarly 
$\hhok \subset \dom (\Delta^k_{g})$. Consequently 
\begin{align*}
\forall f\in \hhok  \forall j\in \N_0, j\leq k:  \ \Delta^j_g e^{-t\Delta_g} * f = e^{-t\Delta_g} * \Delta^j_g f.
\end{align*}
Hence using \eqref{sqrt-time1} and \eqref{sqrt-time2} we deduce 
for $f\in \hhok$ and any $j\leq k$
\begin{align*}
 \|\Delta_g^j e^{-t\Delta_g}*f\|_\A = 
\|e^{-t\Delta_g}*\Delta_g^j f\|_\A \leq C \sqrt{t} \|f\|_{2k+\A}.
\end{align*}
This reduces the statement to Theorem \ref{boundedness}.
\end{proof}


\section{The contraction mapping argument} \label{sec:contraction}

In this section we outline a contraction mapping argument to prove the existence of local 
solutions to certain quasilinear parabolic equations.  For a fixed feasible edge metric $g$ we study the problem
\begin{equation} 
\label{GeneralPDE} 
\partial_t u(t) + \Delta_{g} u = B(u) + Q(u, \nabla u, \Delta u), 
\quad u(0) = 0,
\end{equation}

where roughly speaking $B$ is a not necessarily linear operator consisting of bounded terms of fixed regularity and $Q$ contains the quasilinear terms of lower regularity, which must satisfy certain quadratic estimates.  We remark that below we have not presented the most general possible argument here.  Using Schauder estimates for the homogeneous Cauchy problem for the heat operator it is possible to incorporate nonzero initial conditions.  We will not pursue this here. The main result of this section is

\begin{theorem} \label{thm:STE}
Consider a fixed feasible edge metric $g$ and the Cauchy problem 
\eqref{GeneralPDE} where the operators $B$ and $Q$ 
satisfy the following mapping properties $(k\in \N)$
\begin{enumerate}
\item $B: \hhok \rightarrow \hhok$ 
\item $Q: \hhok \to \hhokm$
\end{enumerate}
such that for any $u \in \hhok$ with $\|u\|_{2k+\A}<\mu$ 
there exists some $C(\mu)>0$ such that
\begin{enumerate}
\item $\| Bu - Bv\|_{2k+\A} \leq C(\mu) \|u-v\|_{2k+\A}, \| B(u)  \|_{2k+\A}\leq C(\mu)$,
\item $\|Qu - Qv\|_{2(k-1)+\A} \leq C(\mu) \max\{\nmhhok{u},\nmhhok{v} \} \nmhhok{u-v}$,
\item $\|Qu\|_{2(k-1)+\A} \leq C(\mu) \nmhhok{u}^2$.
\end{enumerate}
Then there is a unique local solution $u\in \hhok$ to equation \eqref{GeneralPDE} for some $T > 0$ sufficiently small.
\end{theorem}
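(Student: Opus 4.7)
The plan is to apply Banach's contraction mapping theorem to the Duhamel solution map. Define
\[
\Phi(u)(t,p) := \int_0^t e^{-(t-s)\Delta_g} \bigl( B(u) + Q(u) \bigr)(s,p)\, ds,
\]
so that fixed points of $\Phi$ are exactly the solutions of \eqref{GeneralPDE}, and work inside the closed ball
\[
X_{\mu,T} := \bigl\{ u \in \Lambda^{2k+\A, k+\A/2}(M \times [0,T]) : u(0) \equiv 0,\ \|u\|_{2k+\A} \leq \mu \bigr\},
\]
with parameters $\mu, T > 0$ to be chosen small. This is a complete metric space by Proposition \ref{closed}, and $\Phi(u)(0) = 0$ holds automatically from the Duhamel formula.

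The main technical inputs will be the two mapping properties in Corollary \ref{t-gain-cor}. For the $B$-contribution I would use $e^{-t\Delta_g}\colon\hhok \to \sqrt{t}\,\hhok$ combined with the hypothesis $\|B(u)\|_{2k+\A} \leq C(\mu)$, producing a bound of the form $C_1 C(\mu) \sqrt{T}$. For the $Q$-contribution, since $Q(u) \in \hhokm$ by hypothesis, I would instead invoke the two-derivative gain $e^{-t\Delta_g}\colon\hhokm \to \hhok$ (the first mapping of the corollary with $k$ replaced by $k-1$) combined with the quadratic bound $\|Q(u)\|_{2(k-1)+\A} \leq C(\mu)\mu^2$, producing a bound of the form $C_2 C(\mu)\mu^2$. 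Choosing $\mu$ small enough that $C_2 C(\mu)\mu < 1/4$ and then $T$ small so that $C_1 C(\mu)\sqrt{T} \leq \mu/2$ ensures the self-mapping $\Phi(X_{\mu,T}) \subset X_{\mu,T}$. The contraction step proceeds identically from the Lipschitz and quadratic-difference hypotheses on $B$ and $Q$: one obtains
\[
\|\Phi(u) - \Phi(v)\|_{2k+\A} \leq \bigl( C_1 C(\mu)\sqrt{T} + C_2 C(\mu)\mu \bigr)\|u-v\|_{2k+\A},
\]
and further shrinkage of $\mu$ and $T$ makes the prefactor strictly less than $1$. Banach's theorem then delivers the desired unique fixed point.

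The principal subtlety I anticipate is that the $Q$-term does not come with an automatic smallness in $T$: the full two-derivative gain of the parabolic semigroup is already consumed mapping $\hhokm$ into $\hhok$, leaving no $\sqrt{T}$ factor to spare. Smallness for the quasilinear part must therefore be extracted from the quadratic structure together with the radius $\mu$, which is why one selects $\mu$ first and $T$ afterwards rather than the other way around. Once the parameters are fixed in this order, the remaining bookkeeping --- that $s \mapsto B(u)(s)$ and $s \mapsto Q(u)(s)$ define integrable curves in the relevant H\"older spaces, and that the multiplicative closure of $\hhok$ poses no obstruction --- is routine given the material of Section \ref{schauder-est}.
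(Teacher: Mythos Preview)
Your proposal is correct and follows essentially the same approach as the paper: the Duhamel map on the ball $Z_{\mu,T}$ (your $X_{\mu,T}$), with the $\sqrt{T}$-gain from Corollary~\ref{t-gain-cor} applied to the $B$-part and the two-derivative Schauder gain applied to the $Q$-part, choosing $\mu$ first and then $T$. Your explicit remark that the quasilinear term carries no intrinsic $\sqrt{T}$-smallness and must instead be controlled via its quadratic structure is precisely the point the paper's argument hinges on.
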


\begin{proof}
We prove the statement using the contraction mapping principle on the Banach space $\hhok$.  
For positive real parameters $\mu$ and $T$ to be chosen below, set
\[ Z_{\mu, T} := \left\{ u \in \hhok: u(x,0)=0, \nmhhok{u} \leq \mu. \right\}. \]
In view of the inhomogeneous heat equation \eqref{GeneralPDE} we are led to define
\[ \Psi u := \int_0^t \int_M e^{-(t-s) \Delta_g}(p,\widetilde{p}) \big( B(u) + 
Q(u)\big)(s,\widetilde{p}) \,  \textup{dvol}_g(\widetilde{p}) \, ds . \]

First observe by the mapping properties of $B$ and $Q$ and from the Schauder estimates in the previous section that if $u \in \hhok$ then $\Psi u \in \hhok$.  It remains to show $\Psi$ is a contraction on $Z_{\mu,T}$. The proof of the contraction property consists of two parts. First we must show that $\Psi$ maps $Z_{\mu,T}$ to itself for some choice of $\mu$ and $T$, and then we prove the contraction estimate works for the same values of $\mu$ and $T$.  We begin by assuming that $\mu < 1$. Given $u \in Z_{\mu,T}$ we write
\begin{align*}
 \Psi u &= \int_0^t e^{-(t-s)  \Delta_g} \left( B(u) + Q(u) \right) ds \\
&= \int_0^t e^{-(t-s)  \Delta_g} \left( B(u) \right) ds + \int_0^t e^{-(t-s) \Delta_g} \left( Q(u) \right) ds \\
& =: (\Psi u)_1 + (\Psi u)_2.
\end{align*}

Applying the Schauder estimate to $(\Psi u)_2$ and using the properties of $Q$ we find 
\begin{align*}
\nmhhok{(\Psi u)_2} \leq C K \nmhhok{u}^2. 
\end{align*}
If we take $\mu < \min\{ \frac{1}{2CK}, 1 \}$, then for $u \in Z_{\mu,T}$
\[ \nmhhok{(\Psi u)_2} \leq C K \nmhhok{u}^2 \leq CK \mu^2 \leq \frac{\mu}{2}. \]
This fixes the value of $\mu$.  This value of $\mu$ persists if we take smaller values of $T$. Regarding $(\Psi u)_1$ we recall Corollary \ref{t-gain-cor}.  
This, combined with the choice $\mu <1$ implies for $u \in Z_{\mu,T}$ 
and $T<(\mu / 2CK)^2$
\[ \nmhhok{(\Psi u)_1} \leq C K \sqrt{T} \leq \frac{\mu}{2}. \]

Combining these two estimates now shows that $\Psi$ maps the ball $Z_{\mu, T}$ to itself,
provided $\mu,T>0$ are sufficiently small. We now check that $\Psi$ is a contraction.  For $u, v \in Z_{\mu, T}$, we must estimate the $\hhok$ norm of $\Psi u - \Psi v$.  
We use the triangle inequality to break up the integral into two pieces.
Using Corollary \ref{t-gain-cor} we find
\begin{align*}
\nmhhok{\int_0^t e^{-(t-s)  \Delta_g} (B(u)-B(v)) ds } 
\leq C K \sqrt{T} \nmhhok{u-v} < \frac{1}{2} \nmhhok{u-v}. 
\end{align*}
Similarly, using the quadratic estimates for $Q$
\begin{align*}
\nmhhok{\int_0^t e^{-(t-s)  \Delta_g} (Q(u)-Q(v)) ds} \leq C K \mu \nmhhok{u-v} < \frac{1}{2}\nmhhok{u-v} 
\end{align*}

We conclude that $\Psi$ is a contraction on $Z_{\mu, T}$.
By the Banach fixed point theorem, $\Psi$ has a unique fixed point in $Z_{\mu,T}$.  
This concludes the proof of Theorem \ref{thm:STE}.
\end{proof}

\section{Short-time existence of the Yamabe flow} \label{sec:yamabe}
The Yamabe flow is the following geometric evolution equation
\begin{equation} 
\label{YamabeFlow} 
\partial_t g(t) = -\scal(g(t)) \cdot g(t), 
\quad g(0) = \ginit,
\end{equation}
where $\scal(g(t))$ denotes the scalar curvature of the metric $g(t)$.  
See \cite{Brendle} for a recent survey of results related to this flow on compact manifolds.
The Yamabe flow preserves the conformal class of $\ginit$.  In particular, setting
\[ g(t) =  e^{2 u(p,t)} \cdot \ginit. \]
we transform the Yamabe flow to a scalar equation for $u$, where 
we write $\Delta$ for the Laplacian associated to $\ginit$
\begin{equation}\label{Yamabe-transf}
\partial_t u = - \frac{1}{2} e^{-2u} \left( 2(m-1) \Delta u - (m-2)(m-1) |\nabla u|^2 + \scal(\ginit) \right), 
\quad u(p,0) = 0,
\end{equation}

We now condition the Yamabe flow equation.  Using the exponential series we write for any $u\in \hhok$
$$e^{-2u} = 1 + u \sum_{n=0}^\infty \frac{(-2)^{n+1}}{(n+1)} \frac{u^n}{n!}=:1+uG(u),$$
where it is straightforward to check that the elements $e^{-2u}, G(u) \in \hhok$ are locally uniformly bounded. More precisely, for $\nmhhok{u} \leq \mu$ there exists a constant $C(\mu) >0$ such that $\nmhhok{e^{-2u}}, \nmhhok{G(u)} \leq C(\mu)$. Substituting $e^{-2u} = 1 + u G(u)$ into the Yamabe equation \eqref{Yamabe-transf} we obtain
\begin{align*}
\partial_t u &+ (m-1) \Delta u 
= - (m-1) u G(u) \Delta u + \frac{1}{2}(m-2)(m-1) e^{-2u} |\nabla u|^2- \frac{1}{2} e^{-2u} \scal(\ginit) , 
\end{align*}
Note that by a rescaling of the time variable we may remove the factor of $m-1$ multiplying the Laplacian.  We will instead study
\begin{align*}
\partial_t u &+  \Delta u = - u G(u) \Delta u + \frac{1}{2}(m-2) e^{-2u} |\nabla u|^2- \frac{1}{2(m-1)} e^{-2u} \scal(\ginit), 
\end{align*}
without further comment. Now set 
\begin{align*}
Q(u) &:= -  u \, G(u) \, \Delta u + \frac{1}{2}(m-2) e^{-2u} |\nabla u|^2, \\
B(u) &:= - \frac{1}{2(m-1)} e^{-2u} \, \scal(\ginit).
\end{align*}
In order to apply the short-time existence result of the previous section, we need only verify the mapping properties of $Q$ and $B$.  
\begin{lemma} \label{lemma:estforQ}
$Q: \hhok \rightarrow \hhokm$.
Furthermore, for any $u,u'\in \hhok$ with $\nmhhok{u},\nmhhok{u'} \leq \mu$, there 
exists a constant $C(\mu)>0$ such that 
\begin{align*}
&\|Q u - Q u'\|_{2(k-1)+\A} \leq C(\mu) \max\{ \nmhhok{u}, \nmhhok{u'}\} \nmhhok{u-u'}, \\
&\|Q u\|_{2(k-1)} \leq C(\mu) \nmhhok{u}^2.
\end{align*}
\end{lemma}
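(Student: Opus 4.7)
The plan is to establish the three assertions by decomposing $Q(u)$ into products of factors lying in $\hhokm$ and applying three facts: the algebra property of $\hhokm$ under pointwise multiplication, which follows from the Leibniz rule applied to the norm $\|\cdot\|_{2(k-1)+\A}$ exactly as noted for $\hhok$ earlier in the paper; the continuous embedding $\hhok \hookrightarrow \hhokm$, which is immediate since the $\hhokm$ norm only uses a subset of the terms defining the $\hhok$ norm; and the local bounds $\|e^{-2u}\|_{2k+\A}, \|G(u)\|_{2k+\A} \leq C(\mu)$ together with the Lipschitz estimates $\|e^{-2u} - e^{-2u'}\|_{2k+\A}, \|G(u) - G(u')\|_{2k+\A} \leq C(\mu) \|u-u'\|_{2k+\A}$. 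The latter follow from the exponential series expansion used just before the lemma by writing $e^{-2u} - e^{-2u'} = -2(u - u')\int_0^1 e^{-2(u' + s(u - u'))}\, ds$ and invoking the algebra property together with the uniform $\hhok$-bound on the integrand.

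The key observation is that the definition of $\hhok$ directly controls $\Delta_g u \in \hhokm$ and $x^{-1} V u \in \hhokm$ for every $V \in \V$, both with norm bounded by $\|u\|_{2k+\A}$. In a trivializing neighbourhood of the edge, the rigid part of the metric gives
\begin{align*}
|\nabla u|_{g_0}^2 = \bigl(x^{-1}(x\partial_x)u\bigr)^2 + \ha^{ab}\bigl(x^{-1}(x\partial_{y_a})u\bigr)\bigl(x^{-1}(x\partial_{y_b})u\bigr) + \ka^{AB}\bigl(x^{-1}\partial_{z_A}u\bigr)\bigl(x^{-1}\partial_{z_B}u\bigr),
\end{align*}
which is a finite sum of products of two factors of the form $x^{-1}V u$ with $V \in \V$; the perturbation $h = g - g_0$ contributes only additional terms of the same shape, since $|h|_{g_0} = O(x^2)$ keeps the inverse metric coefficients smooth in the edge-tangential variables. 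The algebra property of $\hhokm$ therefore yields $|\nabla u|^2 \in \hhokm$ with $\||\nabla u|^2\|_{2(k-1)+\A} \leq C \|u\|_{2k+\A}^2$.

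The quadratic estimate now follows by reading off powers,
\begin{align*}
\|Q(u)\|_{2(k-1)+\A} \leq C(\mu) \bigl( \|u\|_{2k+\A}\,\|\Delta_g u\|_{2(k-1)+\A} + \||\nabla u|^2\|_{2(k-1)+\A} \bigr) \leq C(\mu) \|u\|_{2k+\A}^2.
\end{align*}
For the Lipschitz estimate I would telescope the nonlinear terms in the standard way, writing
\begin{align*}
uG(u)\Delta_g u - u'G(u')\Delta_g u' = (u - u')G(u)\Delta_g u + u'(G(u) - G(u'))\Delta_g u + u' G(u')(\Delta_g u - \Delta_g u'),
\end{align*}
and similarly for the $e^{-2u}|\nabla u|^2$ summand, expressing the polarization $|\nabla u|^2 - |\nabla u'|^2$ as a sum of products of two edge-normalized derivatives in which one factor involves $u - u'$ and the other involves $u + u'$. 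Each resulting summand contributes exactly one factor of $\|u - u'\|_{2k+\A}$ and at most one factor of $\max\{\|u\|_{2k+\A}, \|u'\|_{2k+\A}\}$, and the desired estimate follows from the algebra property together with the Lipschitz bounds on $G$ and $e^{-2\cdot}$.

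The only steps that merit attention are verifying the algebra property of $\hhokm$ and the representation of $|\nabla u|^2$ as a sum of products of edge-normalized first derivatives; both reduce to routine applications of the Leibniz rule and the unpacking of the definition of $\hhokm$. No new analytic input beyond the Schauder theory in Theorem \ref{boundedness} and Corollary \ref{t-gain-cor} is needed here, as this lemma is a purely algebraic statement about the Hölder spaces.
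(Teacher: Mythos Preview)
Your proposal is correct and follows essentially the same approach as the paper: both arguments rest on the algebra property of the H\"older spaces, the representation of $|\nabla u|^2$ as a finite sum of products of edge-normalized first derivatives $x^{-1}Vu$, and a telescoping decomposition for the Lipschitz estimate. The only notable differences are stylistic: the paper reduces to $k=1$ and argues the general case ``ad verbatim,'' while you work directly in $\hhokm$; and the paper uses the pointwise mean value theorem $e^{-2u}-e^{-2u'}=-2e^{-2\xi}(u-u')$ for the Lipschitz bound on the exponential, whereas your integral representation $-2(u-u')\int_0^1 e^{-2(u'+s(u-u'))}\,ds$ is slightly cleaner since the integrand is manifestly an element of $\hhok$ uniformly in $s$.
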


\begin{proof}
It suffices to consider $k=1$, the higher order cases treated ad verbatim.
The first mapping property is clear by the definition of the space $\hho$ and the fact that the product of functions in $\ho$ is again in $\ho$ by the Leibnitz formula.  Now if $u \in \hho$ with $\nmhho{u} \leq \mu$ then the first term of $Q(u)$ lies in $\ho$, and
\begin{align*}
\| - (m-1) u G(u) \Delta u\|_{\A} & \leq C(\mu) \nmhho{u} \|\Delta u\|_{\A}
\\ & \leq C(\mu) \nmhho{u}^2.
\end{align*}
For the second term, observe that for $u \in \hho$, $|\nabla u| \in \ho$ by the fact that $x^{-1} \mathcal{V}'_e u \in \ho$  
and $\nabla \in x^{-1} \mathcal{V}'_e$. Consequently,
\begin{align*}
\left\|\frac{1}{2}(m-2)(m-1) e^{-2u} |\nabla u|^2
\right\|_{\A} \leq C(\mu) \nmhho{u}^2. 
\end{align*}

The second estimate follows from the fact that $Q$ is locally Lipschitz.  
We estimate only one term, the remaining terms are similar.  In what follows we 
omit constants depending only on $m$.  First observe that fixing and $(p,t)$ we have
\begin{align*}
e^{-2u} |\nabla u|^2 - e^{-2u'} |\nabla u'|^2 &= 
(e^{-2u} - e^{-2u'}) |\nabla u|^2 + e^{-2u'} \left(|\nabla u|^2 - |\nabla u'|^2\right) \\
&= -2 e^{-2 \xi} (u - u') |\nabla u'|^2 + e^{-2u'} (|\nabla u|+ |\nabla u'|)(|\nabla u| - |\nabla u'|),
\end{align*}
where $\xi$ lies between $u(p,t)$ and $u'(p,t)$ by the mean value theorem.  
Estimating this expression using the triangle inequality in the $\ho$ norm we find that
\begin{align*}
&\nmho{e^{-2u} |\nabla u|^2 - e^{-2u'} |\nabla u'|^2} \\
& \hspace{1 cm} \leq C(\mu) \nmhho{u'}^2 \nmhho{u-u'}+ C(\mu) \max\{\nmhho{u},\nmhho{u'} \} \nmhho{u-u'} \\
& \hspace{1 cm} \leq C'(\mu) \max\{ \nmhho{u},\nmhho{u'} \} \nmhho{u-u'},
\end{align*}
as required.
\end{proof}

\begin{lemma} \label{lemma:estforB}
Assume that $\ginit$ is a feasible edge metric with 
$\scal(\ginit) \in \Lambda^{2k+\A}(M,\ginit)$. Then 
$B: \hhok \rightarrow \hhok$.
Furthermore, for any $u,u'\in \hhok$ with $\nmhhok{u},\nmhhok{u'} \leq \mu$, there 
exists a constant $C(\mu)>0$ such that 
\begin{align*}
 &\nmhhok{B(u) - B(u')} \leq C(\mu) \nmhhok{u-u'}, \\
 &\nmhhok{B(u)} \leq C(\mu).
\end{align*}
\end{lemma}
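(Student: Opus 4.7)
The plan is to reduce everything to two facts already in hand: first, the observation (recorded just before Theorem \ref{boundedness}) that $\hhok$ is closed under pointwise multiplication, which in fact yields a Banach-algebra estimate $\|fg\|_{2k+\A} \leq C\|f\|_{2k+\A}\|g\|_{2k+\A}$ from the Leibniz rule; second, the standing hypothesis $\scal(\ginit)\in\Lambda^{2k+\A}(M,\ginit)\subset\hhok$, which identifies $\scal(\ginit)$ as a fixed multiplier of finite norm. Since $B(u) = -\tfrac{1}{2(m-1)}\,e^{-2u}\,\scal(\ginit)$, all three claims of the lemma reduce to the analogous claims for the map $u\mapsto e^{-2u}$.

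First I would show $e^{-2u}\in\hhok$ whenever $u\in\hhok$ with a bound depending only on $\mu=\nmhhok{u}$. Expanding
\[
e^{-2u} \;=\; \sum_{n=0}^{\infty}\frac{(-2)^n}{n!}\, u^n,
\]
the algebra property gives $\nmhhok{u^n}\le C^{n-1}\nmhhok{u}^n$, so
\[
\nmhhok{e^{-2u}} \;\le\; \sum_{n=0}^{\infty}\frac{2^n}{n!}\,C^{n-1}\mu^n \;\le\; C_1(\mu),
\]
the series converging for every $\mu<\infty$. Multiplying by $\scal(\ginit)$ and applying the algebra estimate once more yields
\[
\nmhhok{B(u)} \;\le\; \tfrac{1}{2(m-1)}\,C\,C_1(\mu)\,\nmhhok{\scal(\ginit)} \;=:\; C(\mu),
\]
which gives the mapping $B\colon \hhok\to\hhok$ together with the uniform bound.

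For the Lipschitz estimate I would telescope the power series via
\[
u^n-(u')^n \;=\; (u-u')\sum_{j=0}^{n-1} u^{\,j}(u')^{n-1-j},
\]
so that
\[
e^{-2u}-e^{-2u'} \;=\; (u-u')\sum_{n=1}^{\infty}\frac{(-2)^n}{n!}\sum_{j=0}^{n-1} u^{\,j}(u')^{n-1-j}.
\]
The algebra property bounds each inner sum in $\hhok$ by $nC^{n-1}\mu^{n-1}$, and the resulting series $\sum_n \tfrac{2^n}{n!}nC^{n-1}\mu^{n-1}$ converges to some $C_2(\mu)$. Therefore $\nmhhok{e^{-2u}-e^{-2u'}}\le C_2(\mu)\,\nmhhok{u-u'}$, and multiplying by $\scal(\ginit)$ as above yields $\nmhhok{B(u)-B(u')}\le C(\mu)\nmhhok{u-u'}$, as required.

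There is no genuine obstacle here: the argument is a routine bookkeeping of the Banach-algebra estimate. The only subtle point worth flagging is that the whole argument rests on the hypothesis $\scal(\ginit)\in\hhok$, without which the product $e^{-2u}\scal(\ginit)$ would not even be bounded on $M$ (since $\scal(\ginit)$ blows up like $x^{-2}$ on a generic feasible edge metric); this is precisely why the scalar-curvature condition appears in the statement of Theorem \ref{mainthm} and is discussed in the surrounding paragraphs.
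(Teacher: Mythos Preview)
Your proof is correct and takes essentially the same approach as the paper: both rely on the Banach-algebra structure of $\hhok$ and the power-series expansion $e^{-2u}=1+uG(u)$ (which the paper records just before Lemma~\ref{lemma:estforQ}) to obtain the mapping property and the uniform bound. The only cosmetic difference is in the Lipschitz estimate, where the paper refers back to the mean-value-theorem computation of Lemma~\ref{lemma:estforQ} while you telescope the power series directly; your version is arguably cleaner for the higher-order spaces since it sidesteps any question about the regularity of the intermediate point $\xi$.
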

\begin{proof}
The H\"older space 
$\hhok$ preserved under composition and hence $B$ 
has the desired mapping property. Moreover 
for any $u \in \hhok$ with $\nmhhok{u} \leq \mu$
we have $\nmhhok{e^{-2u}} \leq C(\mu)$ for 
some constant $C(\mu)>0$. This proves the second estimate.
Regarding the first estimate, this follows as in Lemma \ref{lemma:estforQ} by the fact $B$ is locally Lipschitz.
\end{proof}

Using Lemmas \ref{lemma:estforQ} and \ref{lemma:estforB} we may now cite Theorem \ref{thm:STE} to obtain a short-time solution to equation \eqref{Yamabe-transf} and hence equation \eqref{YamabeFlow}.  This finishes the proof of Theorem \ref{mainthm}.  

\def\cprime{$'$}
\providecommand{\bysame}{\leavevmode\hbox to3em{\hrulefill}\thinspace}
\providecommand{\MR}{\relax\ifhmode\unskip\space\fi MR }
\providecommand{\MRhref}[2]{%
  \href{http://www.ams.org/mathscinet-getitem?mr=#1}{#2}
}
\providecommand{\href}[2]{#2}

\end{document}